\newlength{\hatchspread}
\newlength{\hatchthickness}
\newlength{\hatchshift}
\newcommand{\hatchcolor}{}
\tikzset{hatchspread/.code={\setlength{\hatchspread}{#1}},
         hatchthickness/.code={\setlength{\hatchthickness}{#1}},
         hatchshift/.code={\setlength{\hatchshift}{#1}},
         hatchcolor/.code={\renewcommand{\hatchcolor}{#1}}}
\tikzset{hatchspread=2pt,
         hatchthickness=0.2pt,
         hatchshift=0pt,
         hatchcolor=black}
\theoremstyle{plain}
\newtheorem{thm}{Theorem}[section]
\newtheorem*{main}{Main~Theorem}
\newtheorem*{theo}{Theorem}
\newtheorem{cor}[thm]{Corollary}
\newtheorem{lem}[thm]{Lemma} 
\newtheorem{prop}[thm]{Proposition}
\theoremstyle{definition}
\newtheorem{defi}[thm]{Definition}
\theoremstyle{remark}
\newtheorem{rem}[thm]{Remark}
\numberwithin{equation}{section}
\newtheorem{ex}[thm]{Example}
\definecolor{aquamarine}{rgb}{0.5, 1.0, 0.83}
\definecolor{bananamania}{rgb}{0.98, 0.91, 0.71}
\definecolor{blizzardblue}{rgb}{0.67, 0.9, 0.93}
\definecolor{corn}{rgb}{0.98, 0.93, 0.36}
\definecolor{lightgreen}{rgb}{0.76, 0.98, 0.76}
\definecolor{lightcoral}{rgb}{0.97, 0.75, 0.75}
\definecolor{lightblue}{rgb}{0.68, 0.85, 0.9}
\newcommand{\lgw}{\longrightarrow}
\newcommand{\lgm}{\longmapsto}
\newcommand{\Ra}{\operatorname{Raf}^-}
\newcommand{\Raf}{\operatorname{Raf}^+}
\newcommand{\G}{\Gamma}
\renewcommand{\O}{\mathcal{O}}
\newcommand{\Aut}{\operatorname{Aut}}
\newcommand{\Z}{\mathbb{Z}}
\renewcommand{\k}{\Bbbk}
\newcommand{\R}{\mathbb{R}}
\newcommand{\U}{\mathcal U}
\newcommand{\LEQ}{\,\boldsymbol{\leq}\,}
\newcommand{\m}{\mathfrak m}
\newcommand{\SEQ}{\,\boldsymbol{<}\,}
\newcommand{\K}{\mathbb{K}}
\newcommand{\N}{\mathbb{N}}
\newcommand{\Q}{\mathbb{Q}}
\newcommand{\rk}{\operatorname{rank}}
\renewcommand{\phi}{\varphi}
\newcommand{\ZR}{\operatorname{ZR}}
\newcommand{\Ord}{\operatorname{Ord}}
\newcommand{\SZR}{\operatorname{SZR}}
\newcommand{\SOrd}{\operatorname{SOrd}}
\begin{document}
\baselineskip=13pt
%
\title{Some algebraic and topological properties of subspaces of (pre)orders}

\author{Julie Decaup}
\email{julie.decaup@im.unam.mx}
\address{Instituto de Matem\'aticas, Universidad Nacional Aut\'onoma de M\'exico (UNAM), Mexico}

\subjclass[2010]{ 06A05, 06F05, 06F15, 12J20, 13A18, 20F60, 20M10,  54E45 }

\keywords{Preorders, Valuations}

\begin{abstract}
We study  algebraic and topological properties of subsets of preorders on a group. In particular we study properties of the composition of two preorders, generalize a topological theorem of \cite{S} in the case of standard orders and show the same theorem in the case of standard preorders. We also show a property of standard valuations.
\end{abstract}
\maketitle
\tableofcontents
\section{Introduction}
Historically, the study of orderable groups has been developed since the  end of the nineteen century because of their importance in algebraic topology. But the first study of the topological properties of the set of orders on a group is due to Kuroda in the case $G=\Z^n$ \cite{Ku}, and to Sikora in the general case \cite{S}. Here, an order means a total order that is left-invariant. In his paper, Sikora introduced a topology on the set of orders on a group, and showed that this topology is a metric topology in the case of countable groups. For a countable group $G$, Sikora proved that the space of left-invariant orders (denoted by $\Ord_l(G)$) on $G$ is a compact metric space, and shows that this is even a Cantor set when $G=\Z^n$. Subsequently, several authors proved that $\Ord_l(G)$ is a Cantor set for several examples of groups $G$.  \\
The first study of the space of preorders on a group $G$ is due to Ewald and Ishida \cite{EI} for $G=\Z^n$. Let us mention that a preorder satisfies all the properties of an order except that it may not be antisymmetric. In their paper, they introduce a topology of the set of preorders on $\Z^n$ (extending the one of Kuroda), and show the compacity of this set.
\\
In \cite{DR}, G. Rond and the author studied some algebraic, combinatorial and topological properties of the space of preorders on a group. They proved that the space of preorders $\ZR_\ast(G)$ is compact for three topologies introduced in the same paper and that we will recall here (see Subsection \ref{sub topologie}). They also introduced the composition of two preorders, the rank of a preorder and the degree of a preorder. All these definitions will be given in the first section (see Section \ref{preliminaires}).\\\\
Here we study some properties of the composition $\preceq_1 \circ \preceq_2$ of two preorders $\preceq_1$ and $\preceq_2$. In particular we show (see Corollary \ref{borne rank composee}) that 
$$\rk(\preceq_1)\leq\rk(\preceq_1 \circ \preceq_2)\leq \rk(\preceq_1)+\rk(\preceq_2).$$
Then we prove the following theorem (see Theorem \ref{decomp Q}):
\begin{theo}
Let $G=\Q^n$ for some $n\in \N$ and $\preceq \in \ZR(G) \setminus \{\preceq_\emptyset\}$.\\ Then there exists $(\preceq_i)_{1\leq i \leq n} \in \ZR(G)$ of rank one such that $\preceq=\preceq_1\circ\cdots\circ\preceq_n$.
\end{theo}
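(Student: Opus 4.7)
My plan is to induct on $r := \rk(\preceq)$, producing first a decomposition of $\preceq$ into $r$ rank-one factors; extension to the required $n$ factors is a routine padding at the end. Note that $r \le n$: convex subgroups of any preorder on $\Q^n$ are automatically $\Q$-subspaces, because for $c \in C$ with $c \succeq 0$ and any positive integer $m$, divisibility and convexity yield $0 \preceq c/m \preceq c$ and hence $c/m \in C$; so a chain of convex subgroups has at most $n + 1$ terms. The base case $r = 1$ just takes $\preceq_1 := \preceq$.

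For the inductive step $r \ge 2$, let $H$ denote the largest proper convex subgroup of $(\Q^n, \preceq)$, so that $\Q^n/H$ carries an induced preorder of rank one. Lift it back to $\Q^n$ by setting $x \preceq_1 y \iff [x] \preceq_{\Q^n/H} [y]$; this $\preceq_1$ has rank one, with the cosets of $H$ as equivalence classes. Because $H$ is a $\Q$-subspace of $\Q^n$, fix a $\Q$-linear complement $\Q^n = H \oplus H'$, let $\pi_H$ be the projection onto $H$ along $H'$, and define $\preceq'$ on $\Q^n$ by $x \preceq' y \iff \pi_H(x) \preceq \pi_H(y)$. The three things to verify are: (i) $\preceq'$ is a left-invariant total preorder; (ii) $\preceq = \preceq_1 \circ \preceq'$, which splits into the case $x - y \notin H$ (where $\preceq_1$ matches $\preceq$ by construction of the quotient preorder) and the case $x - y \in H$ (where $\pi_H(x - y) = x - y$, so $\preceq'$ matches $\preceq$); (iii) the convex subgroups of $\preceq'$ are exactly the sets $K + H'$ with $K$ convex in $\preceq|_H$, hence $\rk(\preceq') = r - 1$. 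Applying the inductive hypothesis to $\preceq'$ gives $\preceq' = \preceq_2 \circ \cdots \circ \preceq_r$, so $\preceq = \preceq_1 \circ \cdots \circ \preceq_r$.

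To reach exactly $n$ factors I set $\preceq_j := \preceq_r$ for $r < j \le n$; associativity of composition together with the identity $\preceq_r \circ \preceq_r = \preceq_r$ (immediate from the definition of composition) guarantees the extended product still equals $\preceq$. The delicate step is (iii): one shows that the equivalence subgroup of $\preceq'$ is $H_0^{\preceq} + H' \supseteq H'$, so every convex subgroup $N$ of $\preceq'$ contains $H'$, after which the bijection $K \leftrightarrow K + H'$ with convex subgroups of $\preceq|_H$ follows because $x \preceq' y$ depends only on the $H$-components $\pi_H(x), \pi_H(y)$. The entire strategy rests on splitting $\Q^n = H \oplus H'$ as $\Q$-vector spaces, which is exactly where the divisibility of $G = \Q^n$ (as opposed to e.g.\ $G = \Z^n$) is genuinely used.
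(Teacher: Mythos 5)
Your proof is correct, but it takes a genuinely different route from the paper. The paper's proof is a direct computation: it invokes Robbiano's classification (Theorem \ref{rob}, already recalled in Section 2) to write $\preceq\,=\,\leq_{(u_1,\dots,u_s)}$ for vectors $u_1,\dots,u_s\in\R^n$, and then checks in a few lines, straight from the definition of the lexicographic order and of composition, that $\leq_{(u_1,\dots,u_s)}=\preceq_{u_1}\circ\preceq_{(u_2,\dots,u_s)}$, concluding by induction on $s$. You instead run an intrinsic induction on the rank, peeling off the rank-one quotient by the largest proper convex subgroup $H$ and reconstructing the remaining factor via a $\Q$-linear splitting $\Q^n=H\oplus H'$; Robbiano's theorem is never used. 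Your verification of $\preceq\,=\,\preceq_1\circ\preceq'$ (splitting on whether $x-y\in H$) and of the idempotence/padding step is sound, and in fact you are more careful than the paper on one point: the statement asks for exactly $n$ factors while the paper's proof only produces $s\leq n$ of them, a gap you close explicitly via $\preceq_r\circ\preceq_r=\preceq_r$ and associativity. The price of your approach is a reliance on the correspondence between the coarsenings in $\Ra(\preceq)$ and the convex subgroups of $(\Q^n,\preceq)$ --- you use it both to bound the rank by $n$ and to compute $\rk(\preceq')=r-1$ --- which is standard for totally preordered abelian groups but is nowhere stated in the paper, whose rank is defined via $\#\Ra$; this lemma should be stated and proved (or cited) for your argument to be self-contained. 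The trade-off is that the paper's proof is shorter but leans on an external classification theorem specific to $\Q^n$, whereas yours isolates exactly where divisibility enters (the splitting off of a convex subgroup as a direct summand) and would transfer to other settings where convex subgroups are direct summands.
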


We also show that this result is not true for a general group.\\
In \cite{S}, Sikora introduced the standard orders and showed that, if for all $k$ we have $G_k/G_{k+1}$ finitely generated, with $(G_k)_k$ the lower central series of a countable group $G$, the set of standard orders is either empty or homeomorphic to a Cantor set. Here we generalize his definition to standard preorders and study some topological properties of the set of standard preorders. In particular, we show that the set of standard preorders is compact for the three topologies we consider.\\
The main result of Section $4$, and of this paper, is the following:
\begin{main}
Assume $G \neq \Z$. Then
\begin{enumerate}
\item The set of standard orders is either empty or homeomorphic to the Cantor Set.\\
\item The set of standard preorders is either trivial or homeomorphic to the Cantor Set.
\end{enumerate}
\end{main}
So we were able, as suggested in \cite{S}, to remove the hypotesis on the quotients of the subgroups of the lower central series that gives Sikora in \cite{S} and we generalize this theorem to the preorders (see Theorem \ref{standardordercantor}).\\\\
In the last section, we look at the link between valuations and standard preorders. In \cite{DR}, we proved that a preorder $\preceq$ on a group $G$ defines a monomial valuation $\nu\colon \K^\k_G:=\{\sum\limits_{g\in G} a_g x^g \mid a_g \in \k \} \to \Gamma$ with $\Gamma$ some group depending on $G$ and $\preceq$ and $\k$ some field. Here we study what happens when the preorder is a standard one and we show that, except in a few cases, if we take $P(x) \in \K^\k_G$, the element $x^hP(x)$, where $h$ is some element of $G$, is in the maximal ideal of the valuation ring (that is local) of the valuation $\nu$. Something to remark here is that $h$ does not depend on $P$.

\section{The Zariski-Riemann space of preorders}\label{preliminaires}
This section is made to give the basic definitions and results that will be usefull to read the paper. All the results of this section have been proven in \cite{DR}.
\subsection{Generalities}
\begin{defi}\label{preorder}
Let $G$ be a group. We denote by $\ZR_l(G)$ the set of left-invariant preorders on $G$, i.e. the set of binary relations $\preceq$ on $G$ such that
\begin{itemize}
\item[i)] $\forall u, v\in G$, either $u\preceq v$ or $v\preceq u$,
\item[ii)] $\forall u,v,w\in G$, $u\preceq v$ and $v\preceq w$ implies $u\preceq w$,
\item[iii)] (\emph{left invariance}) $\forall u,v,w\in G$, $u\preceq v$ implies $wu\preceq wv$.
\end{itemize}
In the same way, we define right-invariant preorders whose set is denoted by $\ZR_r(G)$. The set of preorders that are bi-invariant, that is left and right-invariant, is denoted by $\ZR(G)$.
The subset of orders of $\ZR_*(G)$  is denoted by $\Ord_*(G)$ for $*=l$, $r$ or $ \emptyset$.\\
The trivial preorder, i.e. the unique preorder $\preceq$ such that $u\preceq v$ for every $u$, $v\in G$ is denoted by $\leq_\emptyset$.
\end{defi}

\begin{defi}
Let $G$ be a group.
For $\preceq\in\ZR_*(G)$ and $u$, $v\in G$, we write $u\prec v$ if 
$$u\preceq v\text{ and }\neg(v\preceq u).$$
\end{defi}

\begin{defi}\label{def_equiv}
Let $G$ be a group.
Let $\preceq\in \ZR_*(G)$. We define an equivalence relation $\sim_\preceq$ as follows: 
$$u\sim_\preceq v \text{ if } u\preceq v \text{ and } v\preceq u.$$
This equivalence relation is compatible with the group law if $\preceq$ is bi-invariant. In this case the quotient $G/\sim_\preceq$ is a group denoted by $G^\preceq$ and $\preceq$ induces in an obvious way an order on $G^\preceq$ still denoted by $\preceq$.

\end{defi}


\subsection{Ordering of the set of orders}
\begin{defi}
Given two preorders $\preceq_1$, $\preceq_2\in\ZR_\ast(G)$ where $G$ is a group, we say that $\preceq_2$ refines $\preceq_1$ if
$$\forall u,v\in G, u\preceq_2 v\Longrightarrow u\preceq_1 v.$$
\end{defi}

\begin{rem}\label{rem1}
Let $\preceq_1$, $\preceq_2\in\ZR_\ast(G)$. If $\preceq_1$ refines $\preceq_2$ and $\preceq_2$ refines $\preceq_1$ then $\preceq_1=\preceq_2$.
\end{rem}

\begin{rem}\label{contr}
By contraposition, $\preceq_2$ refines $\preceq_1$ if and only if 
$$\forall u,v\in G, u\prec_1 v\Longrightarrow u\prec_2 v.$$
\end{rem}


\begin{defi} Let $G$ be a group.
We define a partial order  $\boldsymbol{\leq}$ on $\ZR_\ast(G)$ as follows: for every preorders $\preceq_1$, $\preceq_2\in\ZR_\ast(G)$ we have
$$\preceq_1\ \boldsymbol{\leq} \ \preceq_2$$
if $\preceq_2$ is a refinement of $\preceq_1$. By Remark \ref{rem1} it is straightforward to check that $\boldsymbol{\leq}$ is a partial order.
\end{defi}

Let $G$ be a group and let $\preceq \in \ZR_*(G)$. We  set 
$$\Ra_*(\preceq):=\{\preceq'\in\ZR_*(G) \text{ such that } \preceq' \LEQ \preceq\},$$
$$\Raf_*(\preceq):=\{\preceq'\in\ZR_*(G) \text{ such that } \preceq \LEQ \preceq'\}.$$

\begin{thm}[\cite{DR}, Theorem 2.19]\label{cor_raf_toset}
Let $G$ be a group. Then $\ZR_*(G)$ is a join-semilattice, that is a partially ordered set in which all subsets have  an infimum.\\
Moreover, for every  $\preceq\in\ZR_*(G)$,
 $(\Ra_*(\preceq), \,\boldsymbol{\leq}) $ is a totally ordered set.
\end{thm}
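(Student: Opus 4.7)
The plan is to work throughout with the positive-cone description of preorders. To each $\preceq \in \ZR_*(G)$ I attach
\[ P_\preceq := \{g \in G : e \preceq g\}, \]
so that $u \preceq v$ is equivalent to $u^{-1}v \in P_\preceq$ in the left-invariant case (with the obvious analogue $vu^{-1}\in P_\preceq$ in the right-invariant one, both in the bi-invariant one). Under this dictionary the axioms on $\preceq$ translate into $P_\preceq$ being a submonoid of $G$ with $P_\preceq \cup P_\preceq^{-1} = G$, while the partial order becomes reverse inclusion of cones: $\preceq_1 \LEQ \preceq_2$ iff $P_{\preceq_2} \subseteq P_{\preceq_1}$. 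The lattice-theoretic statements thus become elementary containment statements inside $G$.

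For the semilattice assertion, given a nonempty family $(\preceq_i)_{i \in I}$ I would define $\preceq$ to be the transitive closure of $\bigcup_{i \in I} \preceq_i$: concretely, $u \preceq v$ iff there is a finite chain $u = u_0, u_1, \ldots, u_k = v$ with each $(u_{j-1}, u_j) \in \preceq_{i_j}$ for some $i_j \in I$. The routine checks are that $\preceq$ lies in $\ZR_*(G)$ -- totality follows from the totality of any single $\preceq_i$, transitivity is built in, and the invariance condition transports chains to chains using the invariance of each $\preceq_{i_j}$. By construction $\preceq$ contains each $\preceq_i$, so $\preceq \LEQ \preceq_i$ for every $i$; conversely, any $\preceq''$ with $\preceq'' \LEQ \preceq_i$ for all $i$ must contain each $\preceq_i$ and, being transitive, must contain $\preceq$, so that $\preceq'' \LEQ \preceq$. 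Hence $\preceq$ is the greatest lower bound.

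For the totality of $(\Ra_*(\preceq), \LEQ)$ I would argue by contradiction. Take $\preceq_1, \preceq_2 \in \Ra_*(\preceq)$, with cones $P_1, P_2 \supseteq P_\preceq$, and suppose they are $\LEQ$-incomparable. Then there exist $a \in P_1 \setminus P_2$ and $b \in P_2 \setminus P_1$. The totality of each $\preceq_i$ unpacks this into $b \prec_1 e \preceq_1 a$ and $a \prec_2 e \preceq_2 b$, which forces $x := b^{-1}a$ to lie simultaneously in $P_1 \setminus P_1^{-1}$ and in $P_2^{-1} \setminus P_2$. But the totality of $\preceq$ says $x \in P_\preceq$ or $x^{-1} \in P_\preceq$, and since $P_\preceq$ is contained in both $P_1$ and $P_2$, the first alternative contradicts $x \notin P_2$ and the second contradicts $x^{-1} \notin P_1$. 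So $\preceq_1$ and $\preceq_2$ must be comparable.

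The main point to watch is the status of the empty subset: its infimum would be a greatest element of $\ZR_*(G)$, which need not exist -- for $G = \Z$ the two total orders are both maximal and neither dominates the other -- so ``all subsets'' should be read here as ``all nonempty subsets''. Beyond this, the arguments work uniformly in $* \in \{l, r, \emptyset\}$, since the cone translation is available in each setting and the manipulations above only exploit the submonoid structure of $P_\preceq$ together with the equivalence between totality of $\preceq$ and $P_\preceq \cup P_\preceq^{-1} = G$.
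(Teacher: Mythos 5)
Your proof is correct. There is nothing in-paper to compare it against: the paper states this result without proof, importing it from \cite{DR} (Theorem 2.19). Your argument --- the positive-cone dictionary $P_\preceq=\{g: e\preceq g\}$ with $\preceq_1\LEQ\preceq_2$ corresponding to $P_{\preceq_2}\subseteq P_{\preceq_1}$, the transitive closure of the union as the infimum of a family, and the translation-to-the-identity step producing an element $x$ with $x\in P_1\setminus P_1^{-1}$ and $x\in P_2^{-1}\setminus P_2$ to force comparability of two coarsenings of $\preceq$ --- is the standard route and every step checks out, uniformly in $*\in\{l,r,\emptyset\}$. Your caveat about the empty subset is also well taken: its infimum would be a greatest element of $\ZR_*(G)$, which already fails for $G=\Z$, so the statement must indeed be read as ``all nonempty subsets.''
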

\subsection{Topologies}\label{sub topologie}

\begin{defi}
Let $G$ be a group. The \emph{Zariski  topology} on $\ZR_*(G)$ (or Z-topology for short) is the topology
 for  which the sets
$$\mathcal O_{u}:=\{\preceq\in\ZR_*(G)\mid u\succeq 1\},$$
where $u$   runs over the elements of $G$, form a basis of open sets.  
\end{defi}

\begin{defi}
Let $G$ be a group.
The set $\ZR_*(G)$ is endowed with a topology for  which the sets
$$\mathcal U_{u}:=\{\preceq\in\ZR_*(G)\mid u\succ 1\},$$
where $u$  runs over the elements of $G$, form a basis of open sets. This topology is called the \emph{Inverse topology} or I-topology. 
\end{defi}

\begin{rem}
The I-topology and the Z-topology agree on $\Ord_*(G)$.
\end{rem}

\begin{defi}
The \emph{Patch topology} on $\ZR_*(G)$ (or P-topology for short) is the topology for which the sets $\U_{u}$ and $\O_{u}$, where $u$  runs over $G$, form a basis of open sets. This is the coarsest topology finer than the Zariski and the Inverse topologies.
\end{defi}

\begin{thm}[\cite{DR}, Theorem 2.30] \label{compactZR}
Let $G$ be a group. Then $\ZR_\ast(G)$ is compact for the Z-topology, the I-topology and the P-topology.
\end{thm}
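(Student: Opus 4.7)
The natural strategy is to prove compactness first for the (finest) P-topology, and then deduce it for the Z- and I-topologies by coarseness: any open cover in a coarser topology is already an open cover in a finer one, so it admits a finite subcover. For the P-topology, the plan is to embed $\ZR_*(G)$ as a closed subspace of the compact product $\{0,1\}^G$ (each factor discrete) and invoke Tychonoff.

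Working with $\ZR_l(G)$ for concreteness, I would encode a preorder $\preceq$ by its positive cone $S_{\preceq} := \{u \in G \mid u \succeq 1\}$, equivalently by the characteristic function $\chi_{S_{\preceq}} \in \{0,1\}^G$. Left-invariance gives that $u \preceq v$ holds if and only if $u^{-1}v \in S_{\preceq}$, so the map $\preceq \mapsto \chi_{S_{\preceq}}$ is injective. Under this identification $\mathcal O_u$ corresponds to the cylinder $\{f : f(u) = 1\}$ and $\mathcal U_u$ to $\{f : f(u) = 1 \text{ and } f(u^{-1}) = 0\}$. A short verification using totality shows that on the image the cylinder $\{f : f(u) = 0\}$ coincides with $\mathcal U_{u^{-1}}$, so the sub-basis $\{\{f(u)=1\}, \{f(u)=0\}\}_{u \in G}$ of the product topology restricts precisely to the sub-basis $\{\mathcal O_u, \mathcal U_u\}_{u \in G}$ of the P-topology. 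The cases $\ZR_r(G)$ and $\ZR(G)$ are handled symmetrically.

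It then remains to check that the image is closed. The axioms of Definition~\ref{preorder} translate into $1 \in S$, $S \cdot S \subseteq S$, and $S \cup S^{-1} = G$, with the additional condition $g S g^{-1} \subseteq S$ for all $g \in G$ when $\preceq$ is bi-invariant. Each of these is an intersection, over elements or pairs of elements of $G$, of clopen sub-basic conditions; for example, $S \cdot S \subseteq S$ reads $\bigcap_{a,b \in G} \bigl( \{f(a) = 0\} \cup \{f(b) = 0\} \cup \{f(ab) = 1\} \bigr)$. An arbitrary intersection of closed sets is closed, so the image is closed in $\{0,1\}^G$; Tychonoff then supplies compactness of $\{0,1\}^G$, hence of the image, hence of $\ZR_*(G)$ in the P-topology.

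The main obstacle is essentially bookkeeping: one must carefully match the bases of the three topologies with sub-basic cylinders of the product topology, noticing in particular that the product topology recovers the P-topology (and not one of the two coarser ones alone), so that compactness in the Z- and I-topologies must be obtained \emph{a posteriori} via the coarseness argument of the first step rather than directly from a product construction.
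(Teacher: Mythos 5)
The paper does not prove this theorem; it is quoted from \cite{DR} (Theorem 2.30), so there is no in-paper argument to compare yours against. On its own merits your proof is correct and is the standard one: the positive-cone encoding $\preceq\mapsto\chi_{S_\preceq}$ is injective by left-invariance, the sub-bases match up as you say (with totality giving $\{f(u)=0\}=\mathcal U_{u^{-1}}$ on the image), the defining conditions $1\in S$, $S\cdot S\subseteq S$, $S\cup S^{-1}=G$ (plus $gSg^{-1}\subseteq S$ in the bi-invariant case) cut out a closed subset of $\{0,1\}^G$, and Tychonoff plus the coarseness of the Z- and I-topologies relative to the P-topology finishes the argument.
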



\subsection{Rank and degree of a preorder}

\begin{defi}
Let $G$ be a group and $\preceq\in \ZR_*(G)$. We denote by $\#\Ra_*(\preceq)$ the cardinal of $\Ra_*(\preceq)$ (as an initial ordinal). We define the rank of $\preceq$ in $\ZR_*(G)$ to be 
$$\rk_*(\preceq):=\left\{\begin{array}{cc}\displaystyle\#\Ra_*(\preceq)-1 & \text{ if this cardinal is finite} \\
\displaystyle\#\Ra_*(\preceq) & \text{ if this cardinal is infinite}  \end{array}\right.$$
The subset of $\ZR_*(G)$ of preorders of rank equal to $r$ (resp. greater or equal to $r$) is denoted by $\ZR^r_*(G)$ (resp. $\ZR^{\geq r}_*(G)$). 

\end{defi}

\begin{defi}
Let $G$ be a group and $\preceq\in \ZR_*(G)$. The degree of $\preceq$ in $\ZR_*(G)$ is 
$$\deg_*(\preceq):=\left\{\begin{array}{cc}\displaystyle\sup_{\preceq'\in\Ord_*(G))\cap\Raf_*(\preceq)}\#\left(\Ra_*(\preceq')\setminus\Ra_*(\preceq)\right)-1 & \text{ if this supremum is finite} \\
\displaystyle\sup_{\preceq'\in\Ord_*(G))\cap\Raf_*(\preceq)}\#\left(\Ra_*(\preceq')\setminus\Ra_*(\preceq)\right) & \text{ if this supremum is infinite}  \end{array}\right.$$
The subset of $\ZR_*(G)$ of preorders of degree  equal to $d$ (resp. less or equal to $d$) is denoted by $^d\!\ZR_*(G)$ (resp. $^{\leq d}\!ZR_*(G)$).

\end{defi}

\subsection{Action on the set of preorders}

\begin{defi}
Let $G$ be a group and let $\Aut(G)$ be the group of automorphisms of $G$. Then there is a left action of $\Aut(G)$  on $\ZR_*(G)$ defined as follows:
$$ (\phi,\preceq)\in \Aut(G)\times\ZR_*(G)\lgm\  \preceq_\phi$$
defined by
$$ \forall u,v\in G,\ u  \preceq_\phi v \text{ if } \phi(u)\preceq \phi(v).$$
\end{defi}

\subsection{Example: description of $\ZR(\Q^n)$} 
We recall the following result of Robbiano:

\begin{thm}\cite[Theorem 4]{R}\label{rob}
Let $\preceq\in\ZR(\Q^n)$. Then there exist an integer $s\geq 0$ and vectors $u_1$,\ldots, $u_s\in\R^n$ such that
$$\forall u,v\in \Q^n,\ u\preceq v \Longleftrightarrow (u\cdot u_1,\ldots, u\cdot u_s)\leq_{\text{lex}} (v\cdot u_1,\ldots, v\cdot u_s).$$
Then we write $\preceq=\leq_{\left(u_1,\dots,u_s\right)}$.
\end{thm}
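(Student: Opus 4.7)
The plan is to exhibit the required vectors from the chain of convex $\Q$-subspaces of a suitable quotient, via H\"older's theorem. First I would reduce to an order by setting $H_0 := \{u \in \Q^n : u \sim_\preceq 0\}$. By Definition \ref{def_equiv} this is a subgroup, and a short bi-invariance argument shows it is in fact a $\Q$-subspace: if $nu \in H_0$ for a positive integer $n$ and $u \succ 0$, then iterating left-invariance gives $nu \succ 0$, a contradiction, and similarly for $u \prec 0$. The quotient $G := \Q^n/H_0$ therefore inherits a total order compatible with its $\Q$-vector space structure, with $\dim_\Q G \leq n$. Calling a $\Q$-subspace $C \subseteq G$ convex if $0 \preceq u \preceq v$ with $v \in C$ implies $u \in C$, a standard argument shows that the convex subspaces of $G$ are totally ordered by inclusion; setting $C_0 := G$ and defining $C_i$ inductively as the largest proper convex subspace of $C_{i-1}$ yields a strictly decreasing chain
$$G = C_0 \supsetneq C_1 \supsetneq \cdots \supsetneq C_s = \{0\},$$
with $s \leq n$ and each $C_{i-1}/C_i$ archimedean by maximality of $C_i$.

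Next I would invoke H\"older's theorem: every archimedean totally ordered abelian group embeds, as an ordered group, into $(\R,+,\leq)$. This produces $\Q$-linear order-preserving maps $\pi_i \colon C_{i-1} \twoheadrightarrow C_{i-1}/C_i \hookrightarrow \R$; choosing a $\Q$-linear splitting and pulling back through the projection $\Q^n \twoheadrightarrow G$, each $\pi_i$ extends to a $\Q$-linear form $\Q^n \to \R$, which by linear algebra equals the dot product with a unique $u_i \in \R^n$. By translation invariance the claim reduces to $u \succeq 0 \iff (u \cdot u_1, \ldots, u \cdot u_s) \geq_{\text{lex}} 0$, which I would verify by running through the chain: the sign of $u \cdot u_1$ controls the sign of the class of $u$ in $G/C_1$; if $u \cdot u_1 = 0$ then the class of $u$ lies in $C_1$, where $\pi_2$ is canonical, and one iterates until landing in $C_s = \{0\}$, forcing $u \sim_\preceq 0$.

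The main obstacle I anticipate is controlling the ambiguity introduced by the $\Q$-linear extensions: a priori $u \cdot u_i$ depends on the chosen splitting whenever the class of $u$ in $G$ does not lie in $C_{i-1}$. This is resolved by the observation that in the lexicographic comparison the $i$-th coordinate only matters when $u \cdot u_1 = \cdots = u \cdot u_{i-1} = 0$, which pins the class of $u$ into $C_{i-1}$, where $\pi_i$ is canonically defined; the arbitrary choices outside $C_{i-1}$ therefore never affect the comparison. A secondary technical point, settled in the first paragraph, is the verification that $H_0$ is genuinely a $\Q$-subspace rather than merely a subgroup, which is what makes the dimension bound $s \leq n$ available.
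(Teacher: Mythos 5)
The paper does not prove this statement: it is quoted verbatim from Robbiano \cite{R} as background, so there is no internal proof to compare against. Your argument is a correct, self-contained proof, and it follows the classical route that also underlies Robbiano's original one: quotient by the residue subgroup $H_0=\{u\mid u\sim_\preceq 0\}$ (your divisibility argument correctly shows $H_0$ is a $\Q$-subspace, so the quotient is an ordered $\Q$-vector space of dimension at most $n$), filter by the chain of convex subspaces, apply H\"older's theorem to the archimedean quotients, and realize the resulting $\Q$-linear functionals on $\Q^n$ as dot products with real vectors. The two points that genuinely need care --- that a group embedding of a $\Q$-vector space into $\R$ is automatically $\Q$-linear, and that the arbitrary splitting used to extend $\pi_i$ beyond $C_{i-1}$ never influences the lexicographic comparison because the $i$-th coordinate is only consulted when the earlier coordinates vanish, which pins the class of $u$ inside $C_{i-1}$ --- are both identified and correctly resolved in your last paragraph. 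I see no gap.
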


\begin{prop}
For a given non trivial preorder $\preceq\in\Q^n$, let $s$ be the smallest integer $s$ satisfying Theorem \ref{rob}. Then the rank of $\preceq$ is $s$. 
\end{prop}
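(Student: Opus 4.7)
The plan is to prove the two inequalities $\rk(\preceq)\ge s$ and $\rk(\preceq)\le s$ separately, exhibiting an explicit chain for the lower bound and using the minimality of $s$ together with the total ordering of $\Ra(\preceq)$ (Theorem \ref{cor_raf_toset}) for the upper bound.

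\medskip

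\textbf{Lower bound.} For $0\le k\le s$, set $\preceq_k\,:=\,\leq_{(u_1,\dots,u_k)}$, with the convention $\preceq_0=\leq_\emptyset$. From the definition of the lexicographic order, any pair distinguished by a truncation is also distinguished by the longer list, so
$$\preceq_0 \LEQ \preceq_1 \LEQ \cdots \LEQ \preceq_s=\preceq$$
is a chain in $\Ra(\preceq)$. I would then argue that each inclusion is strict: if $\preceq_k=\preceq_{k+1}$ for some $k<s$, then $u_{k+1}$ never breaks a tie between pairs agreeing on $u_1,\dots,u_k$, which forces $u_{k+1}\in\mathrm{span}_\R(u_1,\dots,u_k)$. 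A short verification then shows that $\preceq$ itself is unchanged after deleting $u_{k+1}$ from the list, contradicting the minimality of $s$. Hence $\#\Ra(\preceq)\ge s+1$ and $\rk(\preceq)\ge s$.

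\medskip

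\textbf{Upper bound.} By the same minimality argument, the vectors $u_1,\dots,u_s$ are $\R$-linearly independent. Given any $\preceq'\in\Ra(\preceq)$, Theorem \ref{rob} applied with minimal length gives $\preceq'=\leq_{(w_1,\dots,w_t)}$ with $w_1,\dots,w_t$ independent. The refinement condition $\preceq'\LEQ\preceq$ translates at the level of the equivalence relations of Definition \ref{def_equiv} into $\bigcap_{i} u_i^{\perp}\subseteq \bigcap_{j} w_j^{\perp}$, hence $\mathrm{span}(w_1,\dots,w_t)\subseteq \mathrm{span}(u_1,\dots,u_s)$. The fact that the refinement respects the whole order (not just the equivalence) then forces $\mathrm{span}(w_1,\dots,w_t)=\mathrm{span}(u_1,\dots,u_t)$ after a suitable upper-triangular change of basis in the $w$'s, which in turn shows $\preceq'=\preceq_t$. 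Thus every coarsening of $\preceq$ lies in the chain built in Step~1, so $\#\Ra(\preceq)\le s+1$ and $\rk(\preceq)\le s$.

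\medskip

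\textbf{Main obstacle.} The delicate step is the upper bound: one must show that any Robbiano representation of a coarsening of $\preceq$ is, up to lex-equivalence, a prefix of $(u_1,\dots,u_s)$. A clean way is to identify $\Ra(\preceq)$ with the lattice of convex subgroups of the ordered group $G^{\preceq}\cong\Q^n/\bigcap u_i^{\perp}$ of Definition \ref{def_equiv}, whose convex subgroups are exactly the strict chain $\bigcap_{i\le k} u_i^{\perp}/\bigcap_{i\le s} u_i^{\perp}$ for $k=0,\dots,s$ (strictness coming once more from the independence of the $u_i$). This reduces the whole proposition to the classical correspondence between coarsenings of an order and convex subgroups of the corresponding ordered group.
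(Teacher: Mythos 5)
Your overall strategy is exactly what the paper does: its entire proof is the one-line assertion that $\Ra(\preceq)=\{\preceq_\emptyset,\preceq_{u_1},\dots,\preceq_{(u_1,\dots,u_s)}\}$, and you are supplying the details of that assertion (the truncations form a strictly increasing chain, and every coarsening is a truncation). The convex-subgroup reformulation you give at the end is the right way to nail down the upper bound.

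One recurring imprecision is worth fixing: you repeatedly pass between real spans/orthogonal complements and what is actually controlled by the preorder, namely the \emph{rational} kernels $\{w\in\Q^n \mid w\cdot u_i=0 \text{ for all } i\le k\}$. Since the $u_i$ live in $\R^n$, these are genuinely different objects: for $u_1=(1,\sqrt2)$ on $\Q^2$ the rational kernel is $\{0\}$ while $u_1^{\perp}$ is a line. Consequently the step ``$u_{k+1}$ never breaks a tie, which forces $u_{k+1}\in\mathrm{span}_\R(u_1,\dots,u_k)$'' is false as stated (in the example, no $u_2$ ever breaks a tie, yet $u_2$ need not lie on the line $\R u_1$); fortunately you do not need it, since ``never breaks a tie'' already implies directly that $u_{k+1}$ can be deleted from the list, contradicting minimality. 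The same conflation affects the upper bound: the inclusion of rational kernels does \emph{not} imply $\mathrm{span}_\R(w_1,\dots,w_t)\subseteq\mathrm{span}_\R(u_1,\dots,u_s)$ (same counterexample), so the ``upper-triangular change of basis'' paragraph does not go through as written. The correct repair is the one you sketch in your last paragraph: work in the ordered group $G^\preceq=\Q^n/\sim_\preceq$, embed it in $(\R^s,\leq_{\mathrm{lex}})$ via $w\mapsto(w\cdot u_1,\dots,w\cdot u_s)$, and use the standard fact that the convex subgroups of a subgroup of $(\R^s,\leq_{\mathrm{lex}})$ are its intersections with $\{0\}^k\times\R^{s-k}$, which are pairwise distinct here by minimality of $s$. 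With that substitution the proof is complete.
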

Indeed, $\Ra(\preceq)=\{\preceq_\emptyset,\preceq_{u_1},\cdots,\preceq_{u_1,\cdots,u_s}\}$
 
By \cite{DR} (Section 3.2), we can represent $\ZR(\Q^n)$ as a tree. Every preorder corresponds to a vertex of the graph. For a preorder $\preceq\neq \leq_\emptyset$, we consider the largest preorder $\preceq'$ such that $\preceq'\SEQ\preceq$. Every such a pair $(\preceq,\preceq')$ corresponds to an edge between $\preceq$ and $\preceq'$. Moreover $\ZR(\Q^n)$ is a rooted tree by designating $\leq_\emptyset$ to be the root. 
 \begin{ex}\label{ZRZ2}

For $n=1$, $\ZR(\Q)$ consists of three elements: the trivial preorder $\leq_\emptyset$ for which $u\leq_\emptyset v$ for every $u$, $v\in{\R_\geq 0}$, and the orders $\leq_1$ and $\leq_{-1}$. Since $\leq_1$ and $\leq_{-1}$ are the two refinements of $\leq_\emptyset$, $\ZR(\Q)$ is a rooted tree with two vertices:

\begin{figure}[H]\fbox{\begin{tikzpicture}[scale=6.9]
\draw[line width=0.5pt] (0,0) -- (1,0);
\draw[line width=0.5pt] (0,0) -- (-1,0);

\filldraw
(0,0) circle (0.5pt) node[align=left,   below] {$\leq_\emptyset$}
 (1,0) circle (0.5pt) node[align=left,   below] {$\leq_1$}
 (-1,0) circle (0.5pt) node[align=right,   below] {$\leq_{-1}$}; 
 
    \end{tikzpicture}}
\caption{The tree $\ZR(\Q)$}
    \end{figure}

\end{ex}
\FloatBarrier
\begin{ex}\label{ex Z3}
In dimension 3, we have the following picture:
\begin{figure}[H]
\fbox{\begin{tikzpicture}[scale=0.87]

\draw[line width=0.5pt] (0,0) -- (2,0);

\draw[line width=0.5pt] (0,0) -- (-1.6,-.7);
\draw[line width=0.5pt] (0,0) -- (-1.69,-1.1);
\draw[line width=0.5pt] (0,0) -- (-0.6,1.5);
\draw[line width=0.5pt] (0,0) -- (0.5,-1.7);
\draw[line width=0.5pt] (0,0) -- (1.92,0.55);
\draw[line width=0.5pt] (0,0) -- (1.79,0.9);
\draw[line width=0.5pt] (0,0) -- (.2,1.9);
\draw[line width=0.5pt] (0,0) -- (-1.7,0.55);
\draw[line width=0.5pt] (0,0) -- (-0.9,-.2);
\draw[line width=0.5pt] (0,0) -- (-.3,-1.2);
\draw[line width=0.5pt] (0.5,-1.7)-- (0.65,-2.5);
\draw[line width=0.5pt] (0.5,-1.7)-- (0.85,-2.45);
\draw[line width=0.5pt] (-1.69,-1.1) -- (-2.5,-1.3);
\draw[line width=0.5pt] (-1.69,-1.1) -- (-2.15,-1.8);
\draw[line width=0.5pt] (1.4,-0.43) -- (2.3,-0.5);
\draw[line width=0.5pt] (1.4,-0.43) -- (2.2,-0.85);
\draw[line width=0.5pt] (-0.6,1.5) --(-0.75,2.3);
\draw[line width=0.5pt] (-0.6,1.5) -- (-1.15,2.15);

\draw[line width=0.5pt] (0,0) -- (-2,0) ;

\draw[line width=0.5pt] (0,0) -- (0,2) ;

\draw[line width=0.5pt] (0,0) -- (1.4,-0.43);

\draw[line width=0.5pt] (0,0) -- (-0.6,-0.57);
\draw[line width=0.5pt] (0,0) -- (0.6,0.57);
\draw[line width=0.5pt] (0,0) -- (-0.6,0.57);

\draw[line width=0.5pt] (0,0) -- (0,-2) ;

\draw[line width=0.5pt] (0,0) -- (0.7,1.3);

\draw[line width=0.5pt] (2,0) -- (2.77,0.3) ;

\draw[line width=0.5pt] (2,0) -- (3,1) ;
\draw[line width=0.5pt] (2,0) -- (3,-1) ;
\draw[line width=0.5pt] (2,0) -- (3.24,0.2) ;
\draw[line width=0.5pt] (2,0) -- (3.23,-0.4) ;
\draw[line width=0.5pt] (2,0) -- (2.81,-0.6) ;
\draw[line width=0.5pt] (2,0) -- (3.25,-0.15) ;
\draw[line width=0.5pt] (2,0) -- (3.18,0.7) ;

\draw[line width=0.5pt] (3.25,-0.15) -- (4,-0.25) ;
\draw[line width=0.5pt] (3.25,-0.15) -- (4,-0.05) ;

\draw[line width=0.5pt] (2.81,-0.6) -- (3.6,-0.5) ;
\draw[line width=0.5pt] (2.81,-0.6) -- (3.6,-0.7) ;

\draw[line width=0.5pt] (3,1) -- (3.8,1.10) ;
\draw[line width=0.5pt] (3,1) -- (3.8,.90) ;

\draw[line width=0.5pt] (3,-1) -- (3.8,-1.10) ;
\draw[line width=0.5pt] (3,-1) -- (3.8,-.90) ;

\draw[line width=0.5pt] (-2,0) -- (-2.77,0.3) ;

\draw[line width=0.5pt] (-2,0) -- (-3,1) ;
\draw[line width=0.5pt] (-2,0) -- (-3,-1) ;
\draw[line width=0.5pt] (-2,0) -- (-3.24,0.2) ;
\draw[line width=0.5pt] (-2,0) -- (-3.23,-0.4) ;
\draw[line width=0.5pt] (-2,0) -- (-2.81,-0.6) ;
\draw[line width=0.5pt] (-2,0) -- (-3.18,0.7) ;
\draw[line width=0.5pt] (-2,0) -- (-3.25,-0.15) ;

\draw[line width=0.5pt] (-3.25,-0.15) -- (-4,-0.25) ;
\draw[line width=0.5pt] (-3.25,-0.15) -- (-4,-0.05) ;

\draw[line width=0.5pt] (-2.81,-0.6) -- (-3.6,-0.5) ;
\draw[line width=0.5pt] (-2.81,-0.6) -- (-3.6,-0.7) ;

\draw[line width=0.5pt] (-3,1) -- (-3.8,1.10) ;
\draw[line width=0.5pt] (-3,1) -- (-3.8,.90) ;

\draw[line width=0.5pt] (-3,-1) -- (-3.8,-1.10) ;
\draw[line width=0.5pt] (-3,-1) -- (-3.8,-.90) ;
\draw[line width=0.5pt] (0,-2) -- (0.3,-2.77) ;

\draw[line width=0.5pt] (0,-2) -- (1,-3) ;
\draw[line width=0.5pt] (0,-2) -- (-1,-3) ;
\draw[line width=0.5pt] (0,-2) -- (0.2,-3.24) ;
\draw[line width=0.5pt] (0,-2) -- (-0.4,-3.23) ;
\draw[line width=0.5pt] (0,-2) -- (-0.6,-2.81) ;
\draw[line width=0.5pt] (0,-2) -- (-0.15,-3.25) ;
\draw[line width=0.5pt] (0,-2) -- (0.7,-3.18) ;

\draw[line width=0.5pt] (-0.15,-3.25) -- (-0.25,-4) ;
\draw[line width=0.5pt] (-0.15,-3.25) -- (-0.05,-4) ;

\draw[line width=0.5pt] (-0.6,-2.81) -- (-0.5,-3.6) ;
\draw[line width=0.5pt] (-0.6,-2.81) -- (-0.7,-3.6) ;

\draw[line width=0.5pt] (1,-3) -- (1.10,-3.8) ;
\draw[line width=0.5pt] (1,-3) -- (.90,-3.8) ;

\draw[line width=0.5pt] (-1,-3) -- (-1.10,-3.8) ;
\draw[line width=0.5pt] (-1,-3) -- (-.90,-3.8) ;
\draw[dashed, ultra thin] (0,3) circle [x radius=10mm, y radius=0.25cm];
\draw[dashed, ultra thin] (0,-3) circle [x radius=10mm, y radius=0.25cm];
\draw[line width=0.5pt] (0,2) -- (0.3,2.77) ;

\draw[line width=0.5pt] (0,2) -- (1,3) ;
\draw[line width=0.5pt] (0,2) -- (-1,3) ;
\draw[line width=0.5pt] (0,2) -- (0.2,3.24) ;
\draw[line width=0.5pt] (0,2) -- (-0.4,3.23) ;
\draw[line width=0.5pt] (0,2) -- (-0.6,2.81) ;
\draw[line width=0.5pt] (0,2) -- (-0.15,3.25) ;
\draw[line width=0.5pt] (0,2) -- (0.7,3.18) ;

\draw[line width=0.5pt] (-0.15,3.25) -- (-0.25,4) ;
\draw[line width=0.5pt] (-0.15,3.25) -- (-0.05,4) ;

\draw[line width=0.5pt] (-0.6,2.81) -- (-0.5,3.6) ;
\draw[line width=0.5pt] (-0.6,2.81) -- (-0.7,3.6) ;

\draw[line width=0.5pt] (1,3) -- (1.10,3.8) ;
\draw[line width=0.5pt] (1,3) -- (.90,3.8) ;

\draw[line width=0.5pt] (-1,3) -- (-1.10,3.8) ;
\draw[line width=0.5pt] (-1,3) -- (-.90,3.8) ;

\draw[line width=0.5pt] (0,0) -- (-1.05,-1.7) ;
\draw[line width=0.5pt] (0,0) -- (-1.6,1.2) ;
\draw[line width=0.5pt] (0,0) -- (1.2,-1.3) ;

\draw[line width=0.5pt] (1.2,-1.3) -- (2,-1.6) ;
\draw[line width=0.5pt] (1.2,-1.3) -- (1.5,-2.1) ;

  \draw[dashed, line width=0.01pt] (0,0) circle (2cm);
  \draw[dashed, line width=0.01pt] (-2,0) arc (180:360:2 and 0.6);
  \draw[dashed,line width=0.01pt] (2,0) arc (0:180:2 and 0.6);
  \fill[fill=black] (0,0) circle (1pt);
\draw[dashed, ultra thin] (3,0) circle [x radius=.25cm, y radius=10mm];

\draw[dashed, ultra thin] (-3,0) circle [x radius=.25cm, y radius=10mm];

\filldraw (2,-1.6) circle (1.pt);
\filldraw (1.5,-2.1) circle (1.pt);
\filldraw (-1.05,-1.7) circle (1.pt);
\filldraw (-1.6,1.2) circle (1.pt);
\filldraw (1.2,-1.3) circle (1.pt);
\filldraw (0,0) circle (1.pt);
\filldraw (2,0) circle (1.pt);
\filldraw (-2,0) circle (1.pt);
\filldraw (3,1) circle (1.pt);
\filldraw (3,-1) circle (1.pt);
\filldraw (2.81,-0.6) circle (1.pt);
\filldraw (3.25,-0.15) circle (1.pt);
\filldraw (3.8,-.90) circle (1.pt);

\filldraw (3.8,-1.10) circle (1.pt);
\filldraw (3.6,-0.7) circle (1.pt);
\filldraw (3.6,-0.5) circle (1.pt);
\filldraw (4,-0.05) circle (1.pt);
\filldraw (4,-0.25) circle (1.pt);
\filldraw (3.18,0.7) circle (1.pt);
\filldraw (3.23,-0.4) circle (1.pt);
\filldraw (3.24,0.2) circle (1.pt);
\filldraw (2.77,0.3) circle (1.pt);

\filldraw (0.65,-2.5) circle (1.pt);
\filldraw (0.85,-2.45) circle (1.pt);
\filldraw (-2.5,-1.3) circle (1.pt);
\filldraw  (-2.15,-1.8) circle (1.pt);
\filldraw  (2.3,-0.5) circle (1.pt);
\filldraw  (2.2,-0.85) circle (1.pt);
\filldraw (-0.75,2.3) circle (1.pt);
\filldraw  (-1.15,2.15) circle (1.pt);
\filldraw (-1.6,-.7) circle (1.pt);
\filldraw (-1.69,-1.1) circle (1.pt);
\filldraw (-0.6,1.5) circle (1.pt);
\filldraw (0.5,-1.7) circle (1.pt);
\filldraw (1.92,0.55) circle (1.pt);
\filldraw (.2,1.9) circle (1.pt);
\filldraw (1.79,0.9) circle (1.pt);
\filldraw (-1.7,0.55) circle (1.pt);
\filldraw (-0.9,-.2) circle (1.pt);
\filldraw (-.3,-1.2) circle (1.pt);
\filldraw (1,3) circle (1.pt);
\filldraw (-1,3) circle (1.pt);
\filldraw (-0.6,2.81) circle (1.pt);
\filldraw (-0.15,3.25) circle (1.pt);
\filldraw (-.90,3.8) circle (1.pt);

\filldraw (-1.10,3.8) circle (1.pt);
\filldraw (-0.7,3.6) circle (1.pt);
\filldraw (-0.5,3.6) circle (1.pt);
\filldraw (-0.05,4) circle (1.pt);
\filldraw (-0.25,4) circle (1.pt);
\filldraw (0.7,3.18) circle (1.pt);
\filldraw (-.4,3.23) circle (1.pt);
\filldraw (.2,3.24) circle (1.pt);
\filldraw (.3,2.77) circle (1.pt);

\filldraw (1.1,3.8) circle (1.pt);
\filldraw (.9,3.8) circle (1.pt);
\filldraw (1,-3) circle (1.pt);
\filldraw (-1,-3) circle (1.pt);
\filldraw (-0.6,-2.81) circle (1.pt);
\filldraw (-0.15,-3.25) circle (1.pt);
\filldraw (-.90,-3.8) circle (1.pt);

\filldraw (-1.10,-3.8) circle (1.pt);
\filldraw (-0.7,-3.6) circle (1.pt);
\filldraw (-0.5,-3.6) circle (1.pt);
\filldraw (-0.05,-4) circle (1.pt);
\filldraw (-0.25,-4) circle (1.pt);
\filldraw (0.7,-3.18) circle (1.pt);
\filldraw (-.4,-3.23) circle (1.pt);
\filldraw (.2,-3.24) circle (1.pt);
\filldraw (.3,-2.77) circle (1.pt);

\filldraw (1.1,-3.8) circle (1.pt);
\filldraw (.9,-3.8) circle (1.pt);
\filldraw (-2.81,-0.6) circle (1.pt);
\filldraw (-3.25,-0.15) circle (1.pt);
\filldraw (-3.8,-.90) circle (1.pt);

\filldraw (-3.8,-1.10) circle (1.pt);
\filldraw (-3.6,-0.7) circle (1.pt);
\filldraw (-3.6,-0.5) circle (1.pt);
\filldraw (-4,-0.05) circle (1.pt);
\filldraw (-4,-0.25) circle (1.pt);
\filldraw (-3.18,0.7) circle (1.pt);
\filldraw (-3.23,-0.4) circle (1.pt);
\filldraw (-3.24,0.2) circle (1.pt);
\filldraw (-2.77,0.3) circle (1.pt);

\filldraw (-3.8,1.10) circle (1.pt);
\filldraw (-3.8,.90) circle (1.pt);

\filldraw (-0.6,-0.57) circle (1.pt);

\filldraw (3.8,1.10) circle (1.pt);
\filldraw (3.8,.90) circle (1.pt);
\filldraw (0,2) circle (1.pt);
\filldraw (0,-2) circle (1.pt);
\filldraw (0.6,0.57) circle (1.pt);
\filldraw (-0.6,0.57) circle (1.pt);
\filldraw (1.4,-0.43) circle (1.pt);
\filldraw (0.7,1.3) circle (1.pt);
\end{tikzpicture}}

\caption{The tree $\ZR(\Q^3)$}.
\end{figure}
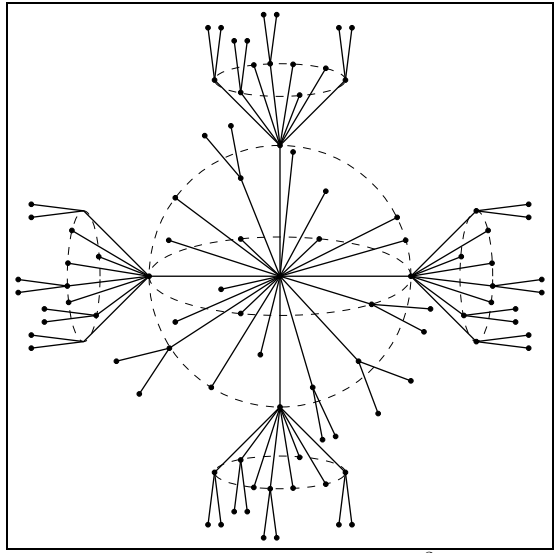

\end{ex}

\FloatBarrier

\section{Composition of preorders}

\begin{defi}Let $G$ be a group and $\preceq_1$,$\preceq_2$ be two elements of $\ZR_\ast(G)$. We define the composition $\preceq=\preceq_1 \circ \preceq_2$ of $\preceq_2$ by $\preceq_1$ as follows:
$$\forall u,v\in G,\ u\preceq v\text{ if }\left\{\begin{array}{c} u\prec_1 v\\
\text{ or } u\sim_{\preceq_1} v \text{ and } u\preceq_2 v.\end{array}\right.$$
\end{defi}

\begin{rem}
In \cite{DR} (Definition 2.48), we gave a more general version of the composition of two preorders, but we will only use this one in this paper.
\end{rem}

\begin{rem}
Let $G$ be a group and $\preceq_1$,$\preceq_2$ be two elements of $\ZR_\ast(G)$. Then $\preceq:=\preceq_1 \circ \preceq_2 \in \ZR_\ast(G)$. It is straightforward to check that $\preceq$ is total and the left and right invariance.\\
Let us show that $u \preceq v$ and $v \preceq w$ implies $u \preceq w$. Assume $u \preceq v$ and $v \preceq w$ and, aiming for contradiction, that $w \prec_1 u$ or $\left( u\sim_{\preceq_1}w \text{ and } w \prec_2 u \right)$. Since $u\preceq v$ and $v \preceq w$ we know that $u \preceq_1 v$ and $v \preceq_1 w$, so $u \preceq_1 w$. Then, we have $u\sim_{\preceq_1}w$ and $w \prec_2 u$. Hence we cannot have $u \prec_1 v$ neither $v\prec_1 w$ so $u\sim_{\preceq_1} v \sim_{\preceq_1} w$ and $u\preceq_2 v \preceq_2 w$. This is a contradiction with the assumption that $w \prec_2 u$.
\end{rem}

\begin{prop}
Let G be a group. The set $\left(\ZR_\ast(G),\circ\right)$ is a monoid, and the identity element is the trivial preorder.
\end{prop}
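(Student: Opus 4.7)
The plan is to verify, beyond what the preceding remark already establishes (that $\preceq_1\circ\preceq_2$ lies in $\ZR_\ast(G)$), that $\leq_\emptyset$ is a two-sided identity and that $\circ$ is associative.

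For the identity, I would first expand $\preceq\circ\leq_\emptyset$. By definition, $u\,(\preceq\circ\leq_\emptyset)\,v$ holds iff either $u\prec v$, or $u\sim_\preceq v$ and $u\leq_\emptyset v$. Since $u\leq_\emptyset v$ holds for every pair, the disjunction collapses to $u\prec v$ or $u\sim_\preceq v$, which is exactly $u\preceq v$. The identity $\leq_\emptyset\circ\preceq=\preceq$ is symmetric: the first clause $u<_\emptyset v$ never occurs, while the second requires $u\sim_{\leq_\emptyset}v$ (always true) and $u\preceq v$, which recovers $\preceq$.

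For associativity, my key preparatory step is to establish a reformulation of the composition: setting $\preceq:=\preceq_1\circ\preceq_2$, one has (i) $u\sim_\preceq v$ iff $u\sim_{\preceq_1}v$ and $u\sim_{\preceq_2}v$, and (ii) $u\prec v$ iff $u\prec_1 v$, or $u\sim_{\preceq_1}v$ and $u\prec_2 v$. Both statements come out of a short case analysis using the definition of $\circ$ together with the totality of each $\preceq_i$ (for instance, to rule out $u\prec_1 v$ and $v\prec_1 u$ occurring simultaneously). Granted (i) and (ii), I would then unfold both $(\preceq_1\circ\preceq_2)\circ\preceq_3$ and $\preceq_1\circ(\preceq_2\circ\preceq_3)$ and observe that each yields the same trichotomous description of $u\preceq v$: either $u\prec_1 v$; or $u\sim_{\preceq_1}v$ and $u\prec_2 v$; or $u\sim_{\preceq_1}v$, $u\sim_{\preceq_2}v$, and $u\preceq_3 v$.

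The only (very mild) obstacle is the bookkeeping required for (i) and (ii); once these are in hand, associativity is a mechanical expansion of definitions. Conceptually, $\preceq_1\circ\preceq_2$ is nothing but the refinement of each $\preceq_1$-equivalence class by $\preceq_2$, an operation which is visibly associative, so no surprises are expected.
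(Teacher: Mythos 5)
Your proposal is correct and follows essentially the same route as the paper: the identity is checked by direct expansion, and associativity is obtained by unfolding both parenthesizations to the common trichotomous description ($u\prec_1 v$; or $u\sim_{\preceq_1}v$ and $u\prec_2 v$; or $u\sim_{\preceq_1}v$, $u\sim_{\preceq_2}v$ and $u\preceq_3 v$). The paper uses your facts (i) and (ii) implicitly inside a one-directional case analysis, whereas you isolate them as a preparatory lemma — a slightly cleaner organization of the same computation.
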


\begin{proof}
It is straightforward to check that for all $\preceq \in \ZR_\ast(G)$, we have $\preceq \circ \preceq_{\emptyset}=\preceq_{\emptyset} \circ \prec = \preceq$. So we just have to check the associativity property. Let $u,v \in G$ and $\preceq_1, \preceq_2, \preceq_3 \in \ZR_\ast(G)$ and assume $u\preceq_1 \circ \left(\preceq_2 \circ \preceq_3\right)v$, we want to show that $u \left(\preceq_1 \circ \preceq_2 \right) \circ \preceq_3 v$ (the other side is the same proof). Since $u\preceq_1 \circ \left(\preceq_2 \circ \preceq_3\right)v$ we know that $u \prec_1 v$ or $u\sim_{\preceq_1}v$ and $u\preceq_{23}v$, where $\preceq_{ij}:=\preceq_i \circ \preceq_j$.\\
Assume $u \prec_1 v$, then we have $u\prec_{12}v$ and so $u \preceq_{12}\circ\preceq_3 v$. Otherwise we have $u\sim_{\preceq_1} v$ and $u\preceq_{23}v$. So either $u\sim_{\preceq_1} v$ and $u \prec_2 v$, ie $u\prec_{12}v$ hence $u \preceq_{12}\circ\preceq_3 v$, or $u\sim_{\preceq_1} v$ and $u\sim_{\preceq_2} v$ and $u\preceq_3 v$, ie $u\sim_{\preceq_{12}} v$ and $u\preceq_3 v$ and so $u \preceq_{12}\circ\preceq_3 v$. This completes the proof.
\end{proof}

\begin{rem}
The trivial preorder is the only preorder that has an inverse. Indeed, for a given $\preceq \in \ZR_\ast(G)$, if $u,v \in G$ are such that $v \prec_1 u$, we cannot find a $\preceq'$ such that $u \preceq \circ \preceq' v$.\\
We will always denote by $\preceq_{ij}$ the composition $\preceq_i \circ \preceq_j$.
\end{rem}

\begin{ex} Let $G=\Q^2$.\\
\begin{enumerate}
\item $\preceq_{\begin{psmallmatrix} 0 \\  1 \end{psmallmatrix}}\circ \preceq_{\begin{psmallmatrix} 1 \\  0 \end{psmallmatrix}}=\preceq_{\begin{psmallmatrix} 0 \\  1 \end{psmallmatrix},\begin{psmallmatrix} 1 \\  0 \end{psmallmatrix}}$\\
Let us give some details, we take $u=(u_1,u_2)$ and $v=(v_1,v_2)$: 
$$u \preceq_{\begin{psmallmatrix} 0 \\  1 \end{psmallmatrix}}\circ \preceq_{\begin{psmallmatrix} 1 \\  0 \end{psmallmatrix}} v \Leftrightarrow \left\{\begin{array}{c} u\prec_{\begin{psmallmatrix} 0 \\  1 \end{psmallmatrix}} v\\
\text{ or } u\sim_{\preceq_{\begin{psmallmatrix} 0 \\  1 \end{psmallmatrix}}} v \text{ and } u\preceq_{\begin{psmallmatrix} 1 \\  0 \end{psmallmatrix}} v.\end{array}\right. $$
So $u \preceq_{\begin{psmallmatrix} 0 \\  1 \end{psmallmatrix}}\circ \preceq_{\begin{psmallmatrix} 1 \\  0 \end{psmallmatrix}} v \Leftrightarrow \left\{\begin{array}{c} u_2<v_2\\
\text{ or } u_2=v_2 \text{ and } u_1 \leq v_1\end{array}\right. \Leftrightarrow u\preceq_{\begin{psmallmatrix} 0 \\  1 \end{psmallmatrix},\begin{psmallmatrix} 1 \\  0 \end{psmallmatrix}}v.$
\item $\preceq_{\begin{psmallmatrix} 0 \\  1 \end{psmallmatrix}}\circ \preceq_{\begin{psmallmatrix} 1 \\  \sqrt{2} \end{psmallmatrix}}=\preceq_{\begin{psmallmatrix} 0 \\  1 \end{psmallmatrix},\begin{psmallmatrix} 1 \\  0 \end{psmallmatrix}}$\\
Let us give some details, we take $u=(u_1,u_2)$ and $v=(v_1,v_2)$: 
$$u \preceq_{\begin{psmallmatrix} 0 \\  1 \end{psmallmatrix}}\circ \preceq_{\begin{psmallmatrix} 1 \\  \sqrt{2} \end{psmallmatrix}} v \Leftrightarrow \left\{\begin{array}{c} u\prec_{\begin{psmallmatrix} 0 \\  1 \end{psmallmatrix}} v\\
\text{ or } u\sim_{\preceq_{\begin{psmallmatrix} 0 \\  1 \end{psmallmatrix}}} v \text{ and } u\preceq_{\begin{psmallmatrix} 1 \\  \sqrt{2} \end{psmallmatrix}} v.\end{array}\right. $$
So $u \preceq_{\begin{psmallmatrix} 0 \\  1 \end{psmallmatrix}}\circ \preceq_{\begin{psmallmatrix} 1 \\  0 \end{psmallmatrix}} v \Leftrightarrow \left\{\begin{array}{c} u_2<v_2\\
\text{ or } u_2=v_2 \text{ and } u_1+\sqrt{2}u_2 \leq v_1+\sqrt{2}v_2 \end{array}\right. \Leftrightarrow u\preceq_{\begin{psmallmatrix} 0 \\  1 \end{psmallmatrix},\begin{psmallmatrix} 1 \\  0 \end{psmallmatrix}}v$
\item $\preceq_{\begin{psmallmatrix} 1 \\  0 \end{psmallmatrix}}\circ \preceq_{\begin{psmallmatrix} 1 \\  \sqrt{2} \end{psmallmatrix}}=\preceq_{\begin{psmallmatrix} 1 \\  0 \end{psmallmatrix},\begin{psmallmatrix} 0 \\  1 \end{psmallmatrix}}$
\item $\preceq_{\begin{psmallmatrix} 1 \\  1 \end{psmallmatrix}}\circ \preceq_{\begin{psmallmatrix} 1 \\  \sqrt{2} \end{psmallmatrix}}=\preceq_{\begin{psmallmatrix} 1 \\  1 \end{psmallmatrix},\begin{psmallmatrix} 0 \\  1 \end{psmallmatrix}}$
\item $\preceq_{\begin{psmallmatrix} 1 \\  \sqrt{2} \end{psmallmatrix}}\circ \preceq_{\begin{psmallmatrix} a \\  b \end{psmallmatrix}}=\preceq_{\begin{psmallmatrix} 1 \\  \sqrt{2} \end{psmallmatrix}}$
\end{enumerate}
\end{ex}

\begin{rem} \label{ordreetpluspetit}
For all $\preceq_1,\preceq_2 \in \ZR_\ast(G)$, we have $\preceq_1 \boldsymbol{\leq} \preceq_{12}$.\\
If $\preceq_1 \in \Ord_\ast(G)$, then $\preceq_1 \circ \preceq_2 =\preceq_1$ for all $\preceq_2 \in \ZR_\ast(G)$.
\end{rem}

\begin{prop}
Let $\varphi \colon G_1 \to G_2$ a group homomorphism. We consider the application $$\tilde{\varphi} \colon \ZR_\ast(G_2) \to \ZR_\ast(G_1)$$ such that $\tilde{\varphi}(\preceq):= \preceq_\varphi$.\\
Then this application is a monoid homomorphism.
\end{prop}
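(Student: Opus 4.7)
The plan is to verify the two monoid homomorphism axioms directly from the definitions, after first checking that $\tilde{\varphi}$ is well-defined. I would begin by confirming that for $\preceq\,\in\ZR_\ast(G_2)$, the pulled-back relation $\preceq_\varphi$ really lies in $\ZR_\ast(G_1)$: totality and transitivity are inherited from $\preceq$ pointwise, while left and right invariance use that $\varphi$ is a homomorphism; for instance, $u\preceq_\varphi v$ gives $\varphi(wu)=\varphi(w)\varphi(u)\preceq\varphi(w)\varphi(v)=\varphi(wv)$, so $wu\preceq_\varphi wv$, and similarly on the right.

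Next I would handle the unit. The identity of $(\ZR_\ast(G_2),\circ)$ is the trivial preorder $\leq_\emptyset$ on $G_2$, which relates every pair of elements in both directions. Pulling back through $\varphi$ therefore relates every pair in $G_1$ in both directions, so $\tilde{\varphi}(\leq_\emptyset)=\leq_\emptyset$ on $G_1$, the identity of $(\ZR_\ast(G_1),\circ)$.

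The core step is the equality $\tilde{\varphi}(\preceq_1\circ\preceq_2)=\tilde{\varphi}(\preceq_1)\circ\tilde{\varphi}(\preceq_2)$. The key auxiliary observations, both immediate from the definitions of $\prec$ and of $\sim_\preceq$, are:
\[
u\prec_{i,\varphi} v\ \Longleftrightarrow\ \varphi(u)\prec_i\varphi(v),\qquad u\sim_{\preceq_{i,\varphi}} v\ \Longleftrightarrow\ \varphi(u)\sim_{\preceq_i}\varphi(v).
\]
Using these, the relation $u\,\tilde{\varphi}(\preceq_1\circ\preceq_2)\,v$ unfolds to
\[
\varphi(u)\prec_1\varphi(v)\ \text{ or }\ \bigl(\varphi(u)\sim_{\preceq_1}\varphi(v)\text{ and }\varphi(u)\preceq_2\varphi(v)\bigr),
\]
which translates via the auxiliary observations to
\[
u\prec_{1,\varphi} v\ \text{ or }\ \bigl(u\sim_{\preceq_{1,\varphi}} v\text{ and }u\preceq_{2,\varphi} v\bigr),
\]
precisely the condition defining $\tilde{\varphi}(\preceq_1)\circ\tilde{\varphi}(\preceq_2)$. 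Each implication is reversible, giving the equality of preorders.

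The statement is really a routine unpacking of definitions, so there is no substantive obstacle: the only places the hypothesis on $\varphi$ is used are the invariance check in well-definedness and nowhere else, since the compatibility of $\tilde{\varphi}$ with $\circ$ only involves preservation of the derived relations $\prec$ and $\sim_\preceq$, which is formal.
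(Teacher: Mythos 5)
Your proof is correct and follows essentially the same route as the paper: both arguments reduce to unfolding the definition of $\preceq_1\circ\preceq_2$ and observing that the derived relations $\prec$ and $\sim_\preceq$ pull back along $\varphi$, so that the defining condition for the composite transfers verbatim. Your explicit checks of well-definedness and of the unit are slightly more detailed than the paper's (which treats both as clear), but this is a matter of exposition, not of substance.
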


\begin{proof}
It is clear that $\tilde{\varphi}$ sends the identity element of $\ZR_\ast(G_2)$ onto the identity element of $\ZR_\ast(G_1)$, so we just have to check that $$\tilde{\varphi}\left(\preceq_{12}\right)=\tilde{\varphi}\left(\preceq_1\right)\circ \tilde{\varphi}\left(\preceq_2\right).$$
We have $\tilde{\varphi}\left(\preceq_{12}\right)=\preceq_{12_\varphi}$. So $u \tilde{\varphi}\left(\preceq_{12}\right) v$ if $\varphi(u) \preceq_{12} \varphi(v)$, it means that $\varphi(u) \prec_1 \varphi(v)$ or $\varphi(u) \sim_{\preceq_1} \varphi(v)$ and $\varphi(u) \preceq_2 \varphi(v)$. Hence $u\prec_{1_\varphi}v$ or $u \sim_{\preceq_{1_\varphi}} v$ and $u\preceq_{2_\varphi}v$, so we are done.
\end{proof}

\begin{prop}
Let $\preceq_1,\preceq_2 \in \ZR_\ast(G)$. Then $$G_{\preceq_{12}}=G_{\preceq_{21}}=G_{\preceq_1} \cap G_{\preceq_2}.$$
\end{prop}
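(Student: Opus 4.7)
The plan is to unfold the definition of the composition $\preceq_{12} := \preceq_1 \circ \preceq_2$ and show directly that $\sim_{\preceq_{12}}$ coincides with the conjunction of $\sim_{\preceq_1}$ and $\sim_{\preceq_2}$. Since the subgroup $G_\preceq$ is the set of elements equivalent to $1$ under the bi-invariant preorder $\preceq$, the triple equality reduces to the pointwise equivalence
\[
g \sim_{\preceq_{12}} 1 \ \Longleftrightarrow \ g \sim_{\preceq_1} 1 \text{ and } g \sim_{\preceq_2} 1,
\]
together with the same statement for $\preceq_{21}$, which will follow by symmetry.

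First I would handle the easy inclusion. Assume $g \sim_{\preceq_1} 1$ and $g \sim_{\preceq_2} 1$. Then $g \sim_{\preceq_1} 1$ and $g \preceq_2 1$, so by the second clause in the definition of the composition we get $g \preceq_{12} 1$; the same argument with $g$ and $1$ swapped gives $1 \preceq_{12} g$, hence $g \sim_{\preceq_{12}} 1$. Thus $G_{\preceq_1} \cap G_{\preceq_2} \subseteq G_{\preceq_{12}}$, and the analogous argument yields $G_{\preceq_1} \cap G_{\preceq_2} \subseteq G_{\preceq_{21}}$.

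For the converse, suppose $g \sim_{\preceq_{12}} 1$, i.e.\ both $g \preceq_{12} 1$ and $1 \preceq_{12} g$ hold. By definition, $g \preceq_{12} 1$ means either $g \prec_1 1$ or ($g \sim_{\preceq_1} 1$ and $g \preceq_2 1$). The first alternative $g \prec_1 1$ is incompatible with $1 \preceq_{12} g$, since both sub-clauses of $1 \preceq_{12} g$ (namely $1 \prec_1 g$, or $1 \sim_{\preceq_1} g$) require $1 \preceq_1 g$, contradicting $g \prec_1 1$. Therefore we must have $g \sim_{\preceq_1} 1$ and $g \preceq_2 1$. Exchanging $g$ and $1$ in this argument gives $1 \sim_{\preceq_1} g$ and $1 \preceq_2 g$, and combining we obtain $g \sim_{\preceq_1} 1$ and $g \sim_{\preceq_2} 1$, i.e.\ $g \in G_{\preceq_1} \cap G_{\preceq_2}$. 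Hence $G_{\preceq_{12}} \subseteq G_{\preceq_1} \cap G_{\preceq_2}$, and exchanging the indices $1$ and $2$ throughout the same reasoning yields $G_{\preceq_{21}} \subseteq G_{\preceq_1} \cap G_{\preceq_2}$.

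There is no real obstacle here: the proof is a direct verification from the definitions, the bi-invariance hypothesis being used only implicitly to ensure that $G_\preceq$ is a subgroup and hence the statement makes sense. The only point that deserves care is the exclusion of the strict alternative $g \prec_1 1$ in the converse step, which is what forces the equivalence to happen simultaneously at both levels of the composition.
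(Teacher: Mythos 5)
Your proof is correct and follows essentially the same route as the paper: unfold the definition of the composition, observe that the strict alternative $g \prec_1 1$ is incompatible with $1 \preceq_{12} g$, and conclude that both clauses must be realized at the level of $\sim_{\preceq_1}$ and $\preceq_2$ simultaneously. The only difference is that you write out the easy inclusion and the $\preceq_{21}$ case explicitly, whereas the paper dismisses them as straightforward.
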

\begin{proof}
Let $u$ be an element of $G_{\preceq_{12}}$. Then we have:
\begin{enumerate}
\item $u \prec_1 1$ or $\left(u\in G_{\preceq_1} \text{ and } u\preceq_2 1\right)$
\item \text{ and } $1 \prec_1 u$ or $\left(u\in G_{\preceq_1} \text{ and } 1\preceq_2 u\right)$
\end{enumerate}
So $\left(u\in G_{\preceq_1} \text{ and } u\preceq_2 1\right)$ and $\left(u\in G_{\preceq_1} \text{ and } 1\preceq_2 u\right)$, it means that $u \in G_{\preceq_1} \cap G_{\preceq_2}$ and we have the inclusion $G_{\preceq_{12}} \subseteq G_{\preceq_1} \cap G_{\preceq_2}$. The other one is straightforward to chek, so this concludes the proof.
\end{proof}

\begin{prop} \label{Ra=plusgrand}
Let $\preceq_1 \boldsymbol{\leq} \preceq_2 \in \ZR_\ast(G)$. Then we have $\preceq_{12}=\preceq_{21}=\preceq_2$.
\end{prop}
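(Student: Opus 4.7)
The plan is to prove both equalities $\preceq_{12}=\preceq_2$ and $\preceq_{21}=\preceq_2$ by double inclusion of the underlying binary relations, working directly from the definition of composition. The only ingredient needed beyond the definition is the refinement hypothesis in both its direct form ($u\preceq_2 v\Rightarrow u\preceq_1 v$) and its contrapositive form (Remark \ref{contr}): $u\prec_1 v\Rightarrow u\prec_2 v$.

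For the equality $\preceq_{12}=\preceq_2$, I would argue as follows. Suppose $u\preceq_{12} v$. By definition of $\circ$, either $u\prec_1 v$, in which case Remark \ref{contr} gives $u\prec_2 v$ and hence $u\preceq_2 v$, or else $u\sim_{\preceq_1} v$ and $u\preceq_2 v$, which gives $u\preceq_2 v$ directly. Conversely, suppose $u\preceq_2 v$. Since $\preceq_2$ refines $\preceq_1$, we have $u\preceq_1 v$, so either $u\prec_1 v$ (yielding $u\preceq_{12} v$ by the first clause of the composition) or $u\sim_{\preceq_1} v$, and in the latter case the hypothesis $u\preceq_2 v$ together with $u\sim_{\preceq_1} v$ gives $u\preceq_{12} v$ by the second clause.

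For the equality $\preceq_{21}=\preceq_2$, suppose first $u\preceq_{21} v$. Then either $u\prec_2 v$, which gives $u\preceq_2 v$, or $u\sim_{\preceq_2} v$ and $u\preceq_1 v$, which also gives $u\preceq_2 v$. Conversely, suppose $u\preceq_2 v$. Either $u\prec_2 v$ (done by the first clause of $\preceq_2\circ\preceq_1$), or $u\sim_{\preceq_2} v$. In the latter case one has both $u\preceq_2 v$ and $v\preceq_2 u$, and applying refinement to each yields $u\preceq_1 v$ and $v\preceq_1 u$, so in particular $u\preceq_1 v$, which together with $u\sim_{\preceq_2} v$ gives $u\preceq_{21} v$ by the second clause.

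There is no real obstacle here; the argument is a routine unwinding of the definition of $\circ$ and the refinement relation. The only point requiring any care is to keep track of which preorder plays which role in each clause of the composition, and in the $\preceq_{21}$ direction to remember that an equivalence with respect to the finer preorder $\preceq_2$ automatically lifts to an equivalence with respect to the coarser preorder $\preceq_1$, so the $\preceq_1$-condition in the second clause is satisfied for free.
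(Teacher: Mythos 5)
Your proof is correct and follows essentially the same route as the paper: both arguments are a direct double-inclusion unwinding of the definition of $\circ$, using the refinement hypothesis in its direct form for one inclusion and its contrapositive (Remark \ref{contr}) for the other, and noting that $\sim_{\preceq_2}$ implies $\sim_{\preceq_1}$ in the $\preceq_{21}$ case. The only cosmetic difference is that the paper cites Remark \ref{ordreetpluspetit} to get the inclusion $\preceq_{21}\LEQ\preceq_2$ requires checking only one direction, whereas you verify that direction by hand.
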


\begin{proof}
First let us show that $\preceq_{12}=\preceq_2$. Let $u,v$ be two elements of $G$ such that $u \preceq_2 v$. Since $\preceq_1 \boldsymbol{\leq} \preceq_2$, we have $u \preceq_1 v$. So either $u \prec_1 v$ or $u \sim_{\preceq_1} v$ and $u \preceq_2 v$, so we do have $u \preceq_{12} v$.\\
Now let $u,v$ be two elements of $G$ such that $u \preceq_{12} v$, then $u \prec_1 v$ or $u \sim_{\preceq_1}v$ and $u \preceq_2 v$. If $u \prec_1 v$, then $u \preceq_2 v$ since $\preceq_1 \boldsymbol{\leq} \preceq_2$. So assume $u \sim_{\preceq_1}v$ and $u \preceq_2 v$, then in particular $u \preceq_2 v$ and this concludes the proof.\\
Now let us show that $\preceq_{21}=\preceq_2$. We already know that $\preceq_2 \boldsymbol{\leq} \preceq_{21}$, so we just have to prove that $\preceq_{21} \boldsymbol{\leq} \preceq_2$. Let $u,v$ be two elements of $G$ such that $u \preceq_2 v$ and let us prove that $u \preceq_{21} v$. If $u \prec_2 v$, then it is done, otherwise, $u \sim_{\preceq_2} v$ and since $\preceq_1 \boldsymbol{\leq} \preceq_2$, we have $u \sim_{\preceq_1} v$ hence $u \sim_{\preceq_{21}} v$. This concludes the proof.
\end{proof}

\begin{prop}\label{rk minore} Let $\preceq_1,\preceq_2 \in \ZR_\ast(G)$. Then we have:
\begin{enumerate}
\item $\Ra(\preceq_1) \subseteq \Ra(\preceq_{12}).$
\item If $\preceq_1 \boldsymbol{\leq} \preceq_2$, then $\Ra(\preceq_{12})=\Ra(\preceq_{21})=\Ra(\preceq_2).$
\end{enumerate}
\end{prop}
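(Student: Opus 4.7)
The plan is to reduce both parts to already-established facts rather than to unfold the composition again from scratch. For item (1), the key observation is that $\Ra_\ast$ is an order ideal with respect to $\boldsymbol{\leq}$: if $\preceq' \boldsymbol{\leq} \preceq$ and $\preceq \boldsymbol{\leq} \preceq''$, then by transitivity of the partial order $\boldsymbol{\leq}$ (which follows directly from Remark \ref{rem1} and the definition via refinement) we get $\preceq' \boldsymbol{\leq} \preceq''$, so $\Ra_\ast(\preceq) \subseteq \Ra_\ast(\preceq'')$ whenever $\preceq \boldsymbol{\leq} \preceq''$. Applying this with $\preceq = \preceq_1$ and $\preceq'' = \preceq_{12}$, it suffices to show $\preceq_1 \boldsymbol{\leq} \preceq_{12}$, which is precisely the content of Remark \ref{ordreetpluspetit}.

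For item (2), the plan is even shorter: Proposition \ref{Ra=plusgrand} tells us that under the hypothesis $\preceq_1 \boldsymbol{\leq} \preceq_2$ one has the strict equality of preorders $\preceq_{12} = \preceq_{21} = \preceq_2$. Taking $\Ra$ of both sides yields the desired equality of subsets of $\ZR_\ast(G)$.

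Since both halves follow from results already proven in the preceding subsection, there is no real obstacle to the proof. The only point requiring a minimal verification is the monotonicity of $\Ra_\ast$ for item (1), which is immediate from the definition $\Ra_\ast(\preceq) = \{\preceq' \in \ZR_\ast(G) \mid \preceq' \boldsymbol{\leq} \preceq\}$ and the transitivity of $\boldsymbol{\leq}$; I would state this explicitly as a one-line observation at the start of the proof so that the reader sees exactly how Remark \ref{ordreetpluspetit} is being used. In summary, the proof will consist of one short sentence invoking Remark \ref{ordreetpluspetit} and transitivity for (1), and one short sentence invoking Proposition \ref{Ra=plusgrand} for (2).
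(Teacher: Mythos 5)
Your proof is correct and follows exactly the paper's own argument: part (1) from $\preceq_1 \boldsymbol{\leq} \preceq_{12}$ (Remark \ref{ordreetpluspetit}) together with transitivity of $\boldsymbol{\leq}$, and part (2) by applying $\Ra$ to the equality in Proposition \ref{Ra=plusgrand}. Your explicit note that $\Ra_\ast$ is monotone with respect to $\boldsymbol{\leq}$ simply spells out what the paper leaves implicit.
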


\begin{proof}
Since $\preceq_1 \boldsymbol{\leq} \preceq_{12}$, we have $(1)$. The $(2)$ is given by Proposition \ref{Ra=plusgrand}.
\end{proof}

\begin{rem}
If $\preceq_1 \in \Ord_\ast(G)$ then $\preceq_{12}=\preceq_1$ so the inclusion is optimal. And this is not an equality in general, because, for example, $\preceq_{\begin{psmallmatrix} 0 \\  1 \end{psmallmatrix}}\circ \preceq_{\begin{psmallmatrix} 1 \\  0 \end{psmallmatrix}}=\preceq_{\begin{psmallmatrix} 0 \\  1 \end{psmallmatrix},\begin{psmallmatrix} 1 \\  0 \end{psmallmatrix}}$.
\end{rem}

\begin{prop} \label{surjective} Let $\preceq_1,\preceq_2 \in \ZR_\ast(G)$ and let $f \colon \Ra(\preceq_1) \cup \Ra(\preceq_2)\setminus \{\preceq_{\emptyset}\} \to \Ra(\preceq_{12})$ be such that $f(\preceq)= \preceq$ if $\preceq \in \Ra(\preceq_1)$ and $f(\preceq)=\preceq_1 \circ \preceq$ otherwise.\\
Then $f$ is a surjective map.
\end{prop}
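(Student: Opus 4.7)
My plan is to analyze any $\preceq^*\in\Ra(\preceq_{12})$ according to its position relative to $\preceq_1$. Since $\Ra(\preceq_{12})$ is totally ordered by Theorem \ref{cor_raf_toset} and $\preceq_1\in\Ra(\preceq_{12})$ by Remark \ref{ordreetpluspetit}, the dichotomy $\preceq^*\LEQ\preceq_1$ or $\preceq_1\SEQ\preceq^*\LEQ\preceq_{12}$ holds. In the first case $\preceq^*\in\Ra(\preceq_1)$ lies in the domain of $f$ and the first defining rule gives $f(\preceq^*)=\preceq^*$, which also covers the boundary element $\preceq^*=\preceq_\emptyset$ (entering through $\Ra(\preceq_1)$).

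For the harder case $\preceq_1\SEQ\preceq^*\LEQ\preceq_{12}$, I need to produce $\sigma\in\Ra(\preceq_2)$ with $\preceq_1\circ\sigma=\preceq^*$. Such a $\sigma$ automatically lies outside $\Ra(\preceq_1)$, since $\sigma\LEQ\preceq_1$ would give $\preceq_1\circ\sigma=\preceq_1\neq\preceq^*$ by Proposition \ref{Ra=plusgrand}; hence the second rule would apply and $f(\sigma)=\preceq^*$. The key observation is that $\preceq_1\circ\sigma$ depends on $\sigma$ only through its restriction to the identity class $G_{\preceq_1}$: the defining clauses give $u(\preceq_1\circ\sigma)v\iff u\preceq_1 v$ whenever $u\not\sim_{\preceq_1}v$ and $u(\preceq_1\circ\sigma)v\iff u\sigma v$ whenever $u\sim_{\preceq_1}v$; by left-invariance the restriction of $\sigma$ to any $\sim_{\preceq_1}$-class is determined by $\sigma|_{G_{\preceq_1}}$. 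Since $\preceq_1\LEQ\preceq^*$ forces $\preceq^*$ to coincide with $\preceq_1$ on pairs from distinct classes, the equation $\preceq_1\circ\sigma=\preceq^*$ collapses to the single requirement $\sigma|_{G_{\preceq_1}}=\preceq^*|_{G_{\preceq_1}}$. Moreover, $\preceq^*|_{G_{\preceq_1}}$ is refined by $\preceq_2|_{G_{\preceq_1}}$: for $u,v\in G_{\preceq_1}$ one has $u\sim_{\preceq_1}v$, so $u\preceq_2 v$ forces $u\preceq_{12}v$ by the definition of composition, and $\preceq_{12}$ refining $\preceq^*$ yields $u\preceq^*v$.

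It remains to build $\sigma\in\Ra(\preceq_2)$ whose restriction to $G_{\preceq_1}$ equals $\preceq^*|_{G_{\preceq_1}}$. My first attempt would be to set $\sigma$ equal to the supremum, inside the chain $\Ra(\preceq_2)$, of the set $\{\tau\in\Ra(\preceq_2):\preceq_1\circ\tau\LEQ\preceq^*\}$; this supremum exists in $\Ra(\preceq_2)$ by Theorem \ref{cor_raf_toset} because the set is nonempty (it contains $\preceq_\emptyset$) and bounded above by $\preceq_2$. The main obstacle is then to verify that this $\sigma$ \emph{attains} $\preceq_1\circ\sigma=\preceq^*$ rather than only bounding it from below: one inequality comes from the order-preservation of $\tau\mapsto\preceq_1\circ\tau$ (shown by unpacking definitions as already done when checking that $f$ is well defined), while the other has to be obtained by a maximality argument — if $\preceq_1\circ\sigma\SEQ\preceq^*$ strictly, exhibit a larger $\tau\in\Ra(\preceq_2)$ still satisfying $\preceq_1\circ\tau\LEQ\preceq^*$, contradicting the supremum. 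An alternative, often cleaner route is to establish directly the surjectivity of the restriction map $\Ra(\preceq_2)\to\Ra(\preceq_2|_{G_{\preceq_1}})$ via the description of preorders by flags of convex normal subgroups, and then to take $\sigma$ as any preimage of $\preceq^*|_{G_{\preceq_1}}$; this is the technical heart of the argument.
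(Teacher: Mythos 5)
Your reduction is sound as far as it goes: the dichotomy along the chain $\Ra(\preceq_{12})$, the observation that $\preceq_1\circ\sigma$ is determined by $\sigma$ only through pairs with $u\sim_{\preceq_1}v$ (hence through $\sigma|_{G_{\preceq_1}}$), and the fact that $\preceq_2|_{G_{\preceq_1}}$ refines $\preceq^*|_{G_{\preceq_1}}$ are all correct and correctly argued. But the statement you reduce to --- the existence of some $\sigma\in\Ra(\preceq_2)$ with $\sigma|_{G_{\preceq_1}}=\preceq^*|_{G_{\preceq_1}}$ --- \emph{is} the whole content of the proposition once that reduction is made, and you do not prove it; you explicitly defer the ``technical heart'' to two sketched routes, neither of which closes the gap. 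The supremum route is circular at exactly the critical point: to rule out $\preceq_1\circ\sigma$ being strictly below $\preceq^*$ you propose to ``exhibit a larger $\tau$'', but producing such a $\tau$ is the same construction problem you are trying to avoid; moreover even the inequality $\preceq_1\circ\sigma\LEQ\preceq^*$ requires the set $\{\tau\in\Ra(\preceq_2):\preceq_1\circ\tau\LEQ\preceq^*\}$ to be closed under taking its supremum, a continuity property of $\tau\mapsto\preceq_1\circ\tau$ that you do not establish (order-preservation only bounds $\preceq_1\circ\sigma$ from below by each $\preceq_1\circ\tau$). The second route asserts surjectivity of the restriction map $\Ra(\preceq_2)\to\Ra(\preceq_2|_{G_{\preceq_1}})$ via a classification by flags of convex normal subgroups; that classification is not developed in the paper for general preordered groups, and the surjectivity of the restriction map is itself a nontrivial extension problem. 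To see that it has real content: the naive extension that matches $\preceq^*$ inside each $\sim_{\preceq_1}$-class and declares all pairs from distinct classes equivalent is \emph{not} transitive --- if $u\not\sim_{\preceq_1}v$ for some $v$, then for every $w\sim_{\preceq_1}u$ transitivity through $v$ forces $u$ and $w$ to be equivalent, so such a relation collapses to the trivial preorder whenever $G_{\preceq_1}\neq G$. So the cross-class behaviour of $\sigma$ must be chosen with care, and your proposal never does so.

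The paper attacks exactly the step you omit: given $\preceq\in\Ra(\preceq_{12})\setminus\Ra(\preceq_1)$ it writes down an explicit candidate $\preceq'$ and then checks by hand that it is a preorder, that it lies in $\Ra(\preceq_2)$, and that $\preceq_1\circ\preceq'=\preceq$. Whatever route you prefer, some such explicit construction (or an actual proof of the extension claim for the restriction map) is indispensable, and it is precisely what is missing from your argument.
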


\begin{proof}
It is straightforward to check that this map is well defined. Let us prove that this is a surjective map and let $\preceq \in \Ra(\preceq_{12})\setminus \Ra(\preceq_1)$. We want to show that there exists $\preceq'\in \Ra(\preceq_2)$ such that $\preceq=\preceq_1 \circ \preceq'$.\\
We define $\preceq'$ as follows:
$u\preceq'v$ if $$\left\{  \begin{array}{lllll}
        u \sim_{\preceq}v \\
        \text{or}\\
        u \not\sim_{\preceq_1} v \\
        \text{or}\\
        u \prec v \text{ and } u\prec_2 v
    \end{array} \right.$$ 
It is not hard to see that $\preceq'$ is a preorder. Now, we show that $\preceq' \in \Ra(\preceq_2)$. Let $u,v\in G$ such that $u \preceq_2 v$. We can assume $u\not\sim_{\preceq}v$ and $u\sim_{\preceq_1}v$, hence we have $u\prec v$ and $u\sim_{\preceq_1}v$ or $v \prec u$ and $u\sim_{\preceq_1}v$.
If we have $u\prec v$ and $u\sim_{\preceq_1}v$ then $u\prec_{12} v$ and $u\sim_{\preceq_1}v$ because $\preceq \boldsymbol{\leq}\preceq_{12}$. So we have $u\prec v$ and $u \prec_2 v$, hence $u\preceq'v$. Otherwise we have $v \prec u$ and $u\sim_{\preceq_1}v$ and with the same argument we have $v \prec_2 u$ that contradicts the fact that $u\preceq_2 v$.\\
Now let us prove that $\preceq=\preceq_1 \circ \preceq'$. First we show that $\preceq \boldsymbol{\leq} \preceq_1 \circ \preceq'$. Let $u,v$ such that $u \preceq_1 \circ \preceq' v$, then either $u \prec_1 v$ or $u \sim_{\preceq_1} v$ and $u \preceq'v$. If $u \prec_1 v$, we do have $u \prec v$ since $\preceq_1 \boldsymbol{\leq} \preceq$. Otherwise we have $u \sim_{\preceq_1} v$ and $u \preceq'v$, that is $u \sim_{\preceq} v$ or $u \prec v$ and $u \prec_2 v$. In each case, we do have $u \preceq v$.\\
So let us prove that $\preceq_1 \circ \preceq'\boldsymbol{\leq}\preceq $ and let $u,v$ such that $u \preceq v$. Since $\preceq_1 \boldsymbol{\leq} \preceq$, we have $u \preceq_1 v$, so either $u \prec_1 v$ or $u \sim_{\preceq_1} v$. If $u \prec_1 v$ then $u \preceq_1 \circ \preceq' v$ by definition of the composition, hence we assume $u \sim_{\preceq_1} v$. No we can assume $u \not\sim_{\preceq} v$ otherwise it is trivial, so we have $u \sim_{\preceq_1} v$ and $u \prec v$. Since $\preceq \boldsymbol{\leq} \preceq_{12}$, we have $u \sim_{\preceq_1} v$ and $u \prec_{12} v$, this means we have $u \sim_{\preceq_1} v$ and $u \prec v$ and $u\prec_2 v$, hence $u \preceq_1 \circ \preceq' v$. This completes the proof.
\end{proof}

\begin{rem}
This map is not injective in general. Indeed, take $G=\Q^2$, $\preceq_1=\preceq_{\begin{psmallmatrix} 0 \\  1 \end{psmallmatrix}}$ and $\preceq_2=\preceq_{\begin{psmallmatrix} 0 \\  -1 \end{psmallmatrix}}$. Then $\preceq_2\in \Ra(\preceq_2)\setminus \{\preceq_\emptyset\}$, $\preceq_2 \notin \Ra(\preceq_1)$ but $\preceq_1 \circ \preceq_2=\preceq_1$.
\end{rem}

\begin{cor} \label{borne rank composee}
Let $\preceq_1 , \preceq_2 \in \ZR_\ast(G)$. Then $\rk(\preceq_1)\leq \rk(\preceq_{12})\leq \rk(\preceq_1)+\rk(\preceq_2)$.
\end{cor}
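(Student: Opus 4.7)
The plan is to reduce both inequalities to cardinality comparisons on the sets $\Ra(\preceq_i)$ and then invoke the two preceding propositions. The only bookkeeping subtlety will be the finite/infinite case split built into the definition of $\rk$.

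First I will handle the lower bound $\rk(\preceq_1) \leq \rk(\preceq_{12})$, which is immediate from Proposition \ref{rk minore}(1): the inclusion $\Ra(\preceq_1) \subseteq \Ra(\preceq_{12})$ gives $\#\Ra(\preceq_1) \leq \#\Ra(\preceq_{12})$, and translating cardinals to ranks preserves this inequality. Indeed, if $\#\Ra(\preceq_{12})$ is finite then so is $\#\Ra(\preceq_1)$ and one subtracts $1$ from each side; otherwise one compares the (possibly infinite) cardinals directly.

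For the upper bound I will use the surjection $f \colon \Ra(\preceq_1) \cup \Ra(\preceq_2) \setminus \{\preceq_\emptyset\} \twoheadrightarrow \Ra(\preceq_{12})$ from Proposition \ref{surjective}. Surjectivity yields
\[
\#\Ra(\preceq_{12}) \leq \#\bigl(\Ra(\preceq_1) \cup \Ra(\preceq_2)\bigr) - 1 \leq \#\Ra(\preceq_1) + \#\Ra(\preceq_2) - 2,
\]
where the two subtractions account for removing $\preceq_\emptyset$: once for the union, and once more because $\preceq_\emptyset \in \Ra(\preceq_1) \cap \Ra(\preceq_2)$ is double-counted in the sum. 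In the finite regime this translates into $\rk(\preceq_{12}) \leq \rk(\preceq_1) + \rk(\preceq_2) - 1$, which is actually stronger than the corollary asks for. If at least one of the cardinals $\#\Ra(\preceq_i)$ is infinite, the union is infinite and cardinal arithmetic absorbs the finite corrections, so the inequality follows at once from surjectivity. I do not anticipate any real obstacle: the corollary is just the cardinality reading of Propositions \ref{rk minore} and \ref{surjective}, together with the routine case split between finite and infinite ranks.
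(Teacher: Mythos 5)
Your strategy is the paper's own: the paper proves this corollary by simply citing Propositions \ref{rk minore} and \ref{surjective}, and your lower-bound argument and your handling of the infinite case are correct. However, your upper-bound count contains a genuine error. The domain of the surjection $f$ in Proposition \ref{surjective} has to be read as $\Ra(\preceq_1)\cup\bigl(\Ra(\preceq_2)\setminus\{\preceq_\emptyset\}\bigr)$ rather than $\bigl(\Ra(\preceq_1)\cup\Ra(\preceq_2)\bigr)\setminus\{\preceq_\emptyset\}$: since $\preceq_1\LEQ\preceq_1\circ\preceq$ for every $\preceq$, no element other than $\preceq_\emptyset$ itself can be sent by $f$ to $\preceq_\emptyset\in\Ra(\preceq_{12})$ (unless $\preceq_1$ is trivial), so $\preceq_\emptyset$ must remain in the domain. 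That domain has cardinality at most $\#\Ra(\preceq_1)+\#\Ra(\preceq_2)-1$, and surjectivity therefore gives $\#\Ra(\preceq_{12})\leq\#\Ra(\preceq_1)+\#\Ra(\preceq_2)-1$, which in the finite case is exactly $\rk(\preceq_{12})\leq\rk(\preceq_1)+\rk(\preceq_2)$ and nothing more. Your double subtraction, producing the ``stronger'' bound $\rk(\preceq_{12})\leq\rk(\preceq_1)+\rk(\preceq_2)-1$, yields a false statement: in the paper's own remark following the corollary, with $G=\Q^5$, $\preceq_1=\preceq_{e_1,e_2}$ and $\preceq_2=\preceq_{e_3,e_4,e_5}$, one has $\rk(\preceq_{12})=5=\rk(\preceq_1)+\rk(\preceq_2)$; in that example $\#\Ra(\preceq_{12})=6$ while $\#\bigl(\Ra(\preceq_1)\cup\Ra(\preceq_2)\bigr)-1=5$, so already your first displayed inequality fails. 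The repair is a one-line correction of the counting, after which your argument coincides with the paper's.
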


\begin{proof}
This is a consequence of Proposition \ref{rk minore} and Proposition \ref{surjective}.
\end{proof}

\begin{rem}
If $\preceq_{12}=\preceq_1$, for example if $\preceq_1$ is an order, then the first inequality of Corollary \ref{borne rank composee} is in fact an equality and the second one is a strict inequality.\\
Now let us consider $G=\Q^5$ and $\left(e_i\right)_{1\leq i \leq 5}$ the canonical basis of $\R^5$. Let $\preceq_1=\preceq_{e_1,e_2}$, $\rk(\preceq_1)=2$, and $\preceq_2=\preceq_{e_3,e_4,e_5}$, $\rk(\preceq_2)=3$. Then $\preceq_1 \circ \preceq_2=\preceq_{e_1,\dots,e_5}$ is of rank $5$ and the second inequality of Corollary \ref{borne rank composee} is in fact an equality and the first one is a strict inequality. This shows that these inequalities are sharp.
\end{rem}
\subsection{Decomposition of preorders}
One can ask if every preorder of a group can be written as a composition of "easy" preorders: preorders of rank $1$. It means, if $G$ is a group and $\preceq \in \ZR_\ast(G) \setminus \{\preceq_\emptyset\}$, does there exist $(\preceq_i)_{1\leq i \leq n} \in \ZR_\ast^1(G)$ such that $\preceq=\preceq_1\circ\cdots\circ\preceq_n$?\\
The answer is no. In general this is not true, but in some cases it is.

In \cite{DR} (Corollary 2.51) we proved the following:
\begin{thm}
Let $G$ be a group and let $\preceq'$, $\preceq\in\ZR_\ast(G)$.  Then
$$\preceq'\LEQ\preceq\  \Longleftrightarrow \ \exists \preceq_2\in \ZR_\ast(G_{\preceq'}),\ \ \preceq=\preceq'\circ\preceq_2.$$
\end{thm}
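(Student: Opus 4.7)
The plan is to prove the equivalence in two directions, using the composition definition extended, when needed, to preorders defined on the subgroup $G_{\preceq'}$ via the generalized composition from \cite{DR} (Definition 2.48).

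For $(\Leftarrow)$, assume $\preceq = \preceq' \circ \preceq_2$ for some $\preceq_2 \in \ZR_\ast(G_{\preceq'})$. For any $u,v \in G$ with $u \prec' v$, the definition of composition immediately yields $u \prec v$. By Remark \ref{contr}, this is exactly the statement that $\preceq$ refines $\preceq'$, i.e. $\preceq' \LEQ \preceq$. This direction is essentially the same observation used in Remark \ref{ordreetpluspetit}.

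For $(\Rightarrow)$, assume $\preceq' \LEQ \preceq$ and define $\preceq_2$ on $G_{\preceq'}$ as the restriction of $\preceq$: for $a,b \in G_{\preceq'}$, set $a \preceq_2 b$ iff $a \preceq b$. The required properties of a bi-invariant preorder on $G_{\preceq'}$ (totality, transitivity, bi-invariance) are inherited from $\preceq$, since $G_{\preceq'}$ is a subgroup. To verify $\preceq = \preceq' \circ \preceq_2$, pick $u,v \in G$. If $u \preceq v$, then $u \preceq' v$ (since $\preceq'\LEQ\preceq$), so either $u \prec' v$ (and the composition holds trivially) or $u \sim_{\preceq'} v$. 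In the latter case, bi-invariance of $\preceq'$ gives $u^{-1}v \sim_{\preceq'} 1$, so $u^{-1}v \in G_{\preceq'}$; similarly $u \preceq v$ translates to $1 \preceq u^{-1}v$, hence $1 \preceq_2 u^{-1}v$, which is exactly the condition satisfied by the composition. Conversely, if $u \, (\preceq' \circ \preceq_2) \, v$, then either $u \prec' v$ (whence $u \prec v$ by contraposition of $\preceq' \LEQ \preceq$) or $u \sim_{\preceq'} v$ with $1 \preceq_2 u^{-1}v$; by the definition of $\preceq_2$ this means $1 \preceq u^{-1}v$, so $u \preceq v$ by bi-invariance. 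Both inclusions together give the desired equality.

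The main obstacle is technical rather than conceptual: making the expression $\preceq' \circ \preceq_2$ precise when $\preceq_2$ only lives on the subgroup $G_{\preceq'}$, rather than on the whole group $G$. This is exactly what the generalized composition of \cite{DR} (Definition 2.48) is designed for; the translation between the condition $u \sim_{\preceq'} v$ and the membership $u^{-1}v \in G_{\preceq'}$ is a standard consequence of bi-invariance. Once these bookkeeping issues are in place, the forward direction is just the observation that the information added by refining $\preceq'$ into $\preceq$ only concerns comparisons within the $\sim_{\preceq'}$-classes, and that information can be repackaged canonically as a preorder on $G_{\preceq'}$.
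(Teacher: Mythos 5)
The paper does not actually prove this statement: it is quoted verbatim as Corollary 2.51 of \cite{DR}, so there is no in-paper argument to compare yours against. That said, your proof is correct and is the natural (almost certainly the intended) one: the backward direction is the observation of Remark \ref{ordreetpluspetit} combined with Remark \ref{contr}, and the forward direction takes $\preceq_2$ to be the restriction of $\preceq$ to the subgroup $G_{\preceq'}$ and unwinds the composition, using that $u\sim_{\preceq'}v$ is equivalent to $u^{-1}v\in G_{\preceq'}$ by invariance (for the left-invariant convention; for right-invariant preorders one works with $uv^{-1}$ instead, a purely cosmetic change). The one dependency, which you flag yourself, is the precise form of the generalized composition of \cite{DR} (Definition 2.48) for a $\preceq_2$ living only on $G_{\preceq'}$; with the expected definition ($u\preceq v$ iff $u\prec' v$, or $u\sim_{\preceq'}v$ and $1\preceq_2 u^{-1}v$) every step you write checks out, including the implicit point that $u\prec' v$ forces $u\prec v$ under the composition, which is needed to invoke Remark \ref{contr}.
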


In fact we want to decompose a preorder $\preceq$ in a composition $\preceq_1 \circ \preceq_2$ where $\preceq_1$ is the only preorder of rank $1$ such that $\preceq_1 \boldsymbol{\leq} \preceq$ and $\preceq_2$ is of rank strictly inferior than the rank of $\preceq$ and conclude by induction. So we want equality in the last inequality of Corollary \ref{borne rank composee}: $\rk(\preceq_{12})\leq \rk(\preceq_1)+\rk(\preceq_2)$, that means we want that the rank of $\preceq_2$ is exactly $\rk(\preceq)-1$. We obviously have $\preceq=\preceq_1 \circ \preceq$ so an idea is to "move" the segment $\left[\preceq_1, \preceq \right]$( that is totally ordered by Theorem \ref{cor_raf_toset}) to the trivial preorder, or more easily to see, to the root of the tree when the set $\ZR_\ast(G)$ can be seen as a rooted tree (see example \ref{ex Z3}).

For the moment, we prove the result in the case $G=\Q^n$, for all $n \in \N$.

\begin{thm}\label{decomp Q}
Let $G=\Q^n$, with $n \in \N$ and $\preceq \in \ZR_\ast(G) \setminus \{\preceq_\emptyset\}$. Then there exist $(\preceq_i)_{1\leq i \leq n} \in \ZR_\ast^1(G)$ such that $\preceq=\preceq_1\circ\cdots\circ\preceq_n$.
\end{thm}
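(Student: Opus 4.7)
The plan is to apply Robbiano's normal form (Theorem \ref{rob}) to write $\preceq\;=\;\preceq_{(u_1,\ldots,u_s)}$ for some vectors $u_1,\ldots,u_s\in\R^n$ with $s$ chosen minimal, and then to identify this preorder with the iterated composition of the rank-one pieces $\preceq_{u_i}$. Because $\preceq$ is non-trivial, $s\geq 1$; and because $s$ is minimal, no $u_i$ lies in the $\R$-linear span of $u_1,\ldots,u_{i-1}$ (otherwise a linear relation would make the $i$-th lex coordinate redundant and contradict minimality). In particular each $u_i\neq 0$, so each $\preceq_{u_i}$ belongs to $\ZR_\ast^1(\Q^n)$, and $s\leq n$.

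The heart of the argument is the identity
$$\preceq_{(u_1,\ldots,u_s)}\;=\;\preceq_{u_1}\circ\preceq_{u_2}\circ\cdots\circ\preceq_{u_s},$$
which I would prove by induction on $s$. The base case $s=1$ is tautological. For the step, I unfold the definition of composition: for $u,v\in\Q^n$, the relation $u\,\bigl(\preceq_{u_1}\circ(\preceq_{u_2}\circ\cdots\circ\preceq_{u_s})\bigr)\,v$ holds iff either $u\prec_{u_1}v$, or $u\sim_{\preceq_{u_1}}v$ together with $u\,(\preceq_{u_2}\circ\cdots\circ\preceq_{u_s})\,v$. Now $u\prec_{u_1}v$ translates to $u\cdot u_1<v\cdot u_1$ and $u\sim_{\preceq_{u_1}}v$ to $u\cdot u_1=v\cdot u_1$, while the inductive hypothesis rewrites the tail as $u\preceq_{(u_2,\ldots,u_s)}v$. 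Reading Robbiano's lexicographic definition from Theorem \ref{rob} one step at a time shows this is exactly $u\preceq_{(u_1,\ldots,u_s)}v$, closing the induction.

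To finish, if $s=n$ the decomposition is already of the required length. If $s<n$, I pad by repeating the last factor: from the definition of composition, for any preorder $\preceq'$ one has $\preceq'\circ\preceq'=\preceq'$, since $u\sim_{\preceq'}v$ implies $u\preceq'v$; applying this with $\preceq'=\preceq_{u_s}$ and using associativity (the monoid structure established earlier in Section 3) gives
$$\preceq\;=\;\preceq_{u_1}\circ\cdots\circ\preceq_{u_s}\circ\underbrace{\preceq_{u_s}\circ\cdots\circ\preceq_{u_s}}_{n-s\ \text{copies}},$$
a composition of exactly $n$ rank-one preorders.

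The only real content is the identity in the second paragraph; the rest is bookkeeping. I expect that to be the main obstacle, and it would be handled by a careful unfolding of the definitions of $\preceq_1\circ\preceq_2$, of $\prec$ and $\sim_{\preceq}$ from Definitions \ref{preorder}--\ref{def_equiv}, and of the lex order in Theorem \ref{rob}. It is also worth noting \emph{en passant} why the argument is special to $\Q^n$: the whole strategy rests on the existence of a Robbiano-type normal form, which is precisely what can fail for a general group and explains the author's remark that the analogous statement is false in general.
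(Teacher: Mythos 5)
Your proof is correct and follows essentially the same route as the paper: apply Robbiano's theorem and verify by induction that $\preceq_{(u_1,\ldots,u_s)}=\preceq_{u_1}\circ\cdots\circ\preceq_{u_s}$ by unfolding the definition of composition against the lexicographic comparison. Your additional observations --- that minimality of $s$ forces $s\leq n$ and that idempotency of composition lets you pad to exactly $n$ factors --- address bookkeeping that the paper's proof glosses over, but do not change the substance of the argument.
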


\begin{proof}
Let $G=\Q^n$ and let $\preceq \in \ZR(\Q^n)$. Then there exist vectors $u_1,\dots,u_s \in \R^n$ such that $\preceq=\preceq_{(u_1,\dots,u_s)}$.\\
We claim that $\preceq_{(u_1,\dots,u_s)}=\preceq_{u_1}\circ \dots \circ \preceq_{u_s}$.\\
Indeed, if $a \preceq_{(u_1,\dots,u_s)} b$ then $$(a.u_1,\dots,a.u_s)\leq_{\text{lex}}(b.u_1,\dots,b.u_s).$$
So $a.u_1<b.u_1$, that is $a\prec_1b$, or $a.u_1=b.u_1$ and $(a.u_2,\dots,a.u_s)\leq_{\text{lex}}(b.u_2,\dots,b.u_s)$, that is $a\sim_{\preceq_{u_1}}b$ or $a\preceq_{(u_2,\dots,u_s)}b$. So $\preceq_{(u_1,\dots,u_s)}=\preceq_{u_1} \circ \preceq_{(u_2,\dots,u_s)}$ and we conclude by induction.
\end{proof}
\begin{rem}
The result does not hold in general. Take for example $$G=\Z^{\infty}:=\{(a_n)_{n \in \N} \text{, }a_n \in \Z \text{ such that } a_n=0 \text{ for }n \text{ large enough}\}.$$
We just have to found an element of $\ZR(G)$ that cannot be decomposed as a composition of rank $1$ preorders.\\
We define $\preceq \in \ZR(G)$ as following: take $(a_n)_{n \in \N}\in G$ and let $k$ be the maximum of the integers such that $a_k \neq 0$. Then we say that $0 \preceq (a_n)_{n \in \N}$ if $a_k \in \N$. This defines a preorder of $G$ that cannot be decomposed as a composition of rank $1$ preorders.
\end{rem}
\section{Standard preorders}
Along this section, $G$ is assumed to be countable and we only consider bi-invariant (pre)orders.\\
Let $G=G_0 \trianglerighteq G_1 \trianglerighteq \cdots \trianglerighteq G_n \trianglerighteq \cdots$ be the lower central series of a group $G$ such that $\bigcap\limits_{k=0}^\infty G_k=\{1\}$.

The following definition was introduced by Sikora in \cite{S} for orders.

\begin{defi}Any preorder $\preceq \in \ZR(G)$ is called a \emph{standard preorder} if for every $g\in G$, if $g\in G_k \setminus G_{k+1}$ and $g$ positive then all elements of $gG_{k+1}$ are positive. We denote by $\SZR(G)$ the set of standard preorders of $G$ and $\SOrd(G)$ the set of standard orders of $G$.
\end{defi}

\begin{prop}\label{standard compo}
Let $\preceq_1, \preceq_2 \in \SZR(G)$, then $\preceq_{12} \in \SZR(G)$.
\end{prop}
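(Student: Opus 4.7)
The plan is to verify the standard condition for $\preceq_{12} := \preceq_1 \circ \preceq_2$ directly. Fix $g \in G_k \setminus G_{k+1}$ with $1 \preceq_{12} g$, and let $h \in gG_{k+1}$. Since $G_{k+1} \subseteq G_k$, we have $h = gg' \in G_k$, and $h \notin G_{k+1}$ (otherwise $g = h(g')^{-1}$ would lie in $G_{k+1}$); hence $h \in G_k \setminus G_{k+1}$, so the standard condition is applicable to $h$.

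The proof hinges on a preliminary claim: for any standard preorder $\preceq$ and any $g \in G_k \setminus G_{k+1}$, the ``sign'' of $g$ (namely $1 \prec g$, $g \prec 1$, or $1 \sim_\preceq g$) is constant as $g$ varies through its coset $gG_{k+1}$. Positivity-constancy is the definition of standard. For the negative side, apply the definition to $g^{-1}$, which lies in $G_k \setminus G_{k+1}$, and use that $G_{k+1}$ being normal in $G$ gives $(gG_{k+1})^{-1} = g^{-1}G_{k+1}$; passing through the inverse transfers the conclusion back. Equivalence-constancy then follows by intersecting the two sides.

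Now case-split on $1 \preceq_{12} g$. If $1 \prec_1 g$, the claim applied to $\preceq_1$ forces $1 \prec_1 h$, and therefore $1 \preceq_{12} h$ directly from the definition of composition. If instead $1 \sim_{\preceq_1} g$ and $1 \preceq_2 g$, the claim for $\preceq_1$ yields $1 \sim_{\preceq_1} h$, and applying the standard condition for $\preceq_2$ (directly when $1 \prec_2 g$, via the claim when $1 \sim_{\preceq_2} g$) yields $1 \preceq_2 h$; combining these two gives $1 \preceq_{12} h$.

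The main obstacle I anticipate is the preliminary claim itself --- specifically the passage from the one-sided ``positive implies coset positive'' condition to full sign-constancy on the coset, which requires bi-invariance together with the normality of $G_{k+1}$. Once that lemma is isolated, the case analysis on the definition of $\preceq_1 \circ \preceq_2$ is mechanical, and one should also remark that the argument is symmetric, so the statement holds for both standard preorders and standard orders simultaneously.
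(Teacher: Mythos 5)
Your proof is correct, and it is organized differently from the paper's. Your sign-constancy claim holds: positivity propagates along the coset by the definition of standardness; negativity propagates because inversion carries $gG_{k+1}$ onto $g^{-1}G_{k+1}$ (the terms of the lower central series are normal) with $g^{-1}\in G_k\setminus G_{k+1}$; and equivalence-constancy follows by trichotomy, since a single strictly positive (resp.\ strictly negative) element of the coset would force the whole coset, hence $g$ itself, to be strictly positive (resp.\ negative). The paper instead works directly with $gh$ for $h\in G_{k+1}$ and, in the case $g\sim_{\preceq_1}1$ and $g\succ_2 1$, runs a sub-case analysis on the sign of $h$ under $\preceq_1$, killing the sub-case $h\prec_1 1$ by applying standardness to $g^{-1}h^{-1}$ --- which is exactly the inverse-plus-normality mechanism you package once in the lemma. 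Your reorganization buys a mechanical two-case finish and even shows that the paper's sub-case $h\succ_1 1$ is vacuous (it would force $g\succ_1 1$); the paper's version avoids stating an auxiliary lemma but pays for it with the extra sub-cases.

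One small imprecision: ``positive'' in the definition of a standard preorder means $g\succ 1$, so the condition to verify is $1\prec_{12} g\Rightarrow 1\prec_{12} h$ for all $h\in gG_{k+1}$, whereas you phrase both hypothesis and conclusion with $\preceq_{12}$. This costs nothing: running your two cases on $1\prec_{12}g$ gives either $1\prec_1 h$, or $1\sim_{\preceq_1}h$ together with $1\prec_2 h$, and each forces $1\prec_{12}h$; but the statement should be written strictly.
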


\begin{proof}
Let $g \in G$, $g\in G_k\setminus G_{k+1}$ such that $g \succ_{12} 1$, $h\in G_{k+1}$ and let us show that $gh\succ_{12}1$. We know that $g \succ_{12} 1$ so we have either $$g \succ_1 1$$ or $$g \sim_{\preceq_1}1 \text{ and } g \succ_2 1.$$
If $g \succ_1 1$ since $\preceq_1\in\SZR(G)$, we do have $gh\succ_{1}1$ and so $gh\succ_{12}1$. Otherwise we assume $g \sim_{\preceq_1}1 \text{ and } g \succ_2 1.$ Since $\preceq_2\in\SZR(G)$ we have $g \sim_{\preceq_1}1 \text{ and } gh \succ_2 1$. Then $gh \sim_{\preceq_1}h \text{ and } gh \succ_2 1$, so if $h \sim_{\preceq_1} 1$ we do have $gh\succ_{12}1$. Otherwise either $h \succ_1 1$ or $1 \succ_1 h$. If $h \succ_1 1$, since $gh \sim_{\preceq_1}h$ we have $gh \succ_1 1$ and so $gh \succ_{12}1$. Then we assume $1 \succ_1 h$, ie $g^{-1}h^{-1} \succ_1 1$ because $g \sim_{\preceq_1} 1$. Since $\preceq_1\in\SZR(G)$ and $g^{-1}h^{-1} \in G_k\setminus G_{k+1}$, we have $g^{-1}=g^{-1}h^{-1}h \succ_1 1$ and this contradicts the fact that $g \sim_{\preceq_1} 1$. This completes the proof.

\end{proof}

\begin{thm} \label{SZR closed}
The set $\SZR(G)$ is closed in $\ZR(G)$ for the Inverse and the Patch topology.
\end{thm}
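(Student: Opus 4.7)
The plan is to prove that the complement $\ZR(G) \setminus \SZR(G)$ is open in each topology. By definition, this complement consists of those $\preceq$ for which there exist $k \in \N$, $g \in G_k \setminus G_{k+1}$, and $h \in G_{k+1}$ with $g \succ 1$ and $gh \preceq 1$.

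For the Patch topology the argument is essentially bookkeeping: by bi-invariance, $gh \preceq 1$ is equivalent to $(gh)^{-1} \succeq 1$, so the set of violating preorders at a fixed triple $(k,g,h)$ equals $\U_g \cap \O_{(gh)^{-1}}$. Since $\U_g$ is a basic Inverse-open and $\O_{(gh)^{-1}}$ is a basic Zariski-open, their intersection is Patch-open, and so is the union over all $(k,g,h)$; this shows $\SZR(G)$ is Patch-closed.

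For the Inverse topology the description above is not directly useful because $\O_u$ is not in general Inverse-open. I will resolve this via an \emph{upgrade} from a weak violation ($gh \sim 1$) to a strict one ($gh' \prec 1$). Concretely, assume $g \succ 1$ and $gh \sim 1$ with $g \in G_k \setminus G_{k+1}$ and $h \in G_{k+1}$. Left-multiplying $gh \sim 1$ by $g^{-1}$ yields $h \sim g^{-1}$; combined with $g \succ 1$, transitivity forces $h \prec 1$. Right-multiplying $gh \sim 1$ by $h$ then gives $gh^2 \sim h \prec 1$, hence $gh^2 \prec 1$; and $h^2 \in G_{k+1}$, so $(g, h^2)$ is a strict violation. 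Consequently every non-standard $\preceq$ lies in an Inverse-open set of the form $\U_g \cap \U_{(g h')^{-1}}$ (where $h' = h$ if $gh \prec 1$ already, and $h' = h^2$ otherwise). Any $\preceq'$ in such a set satisfies $g \succ' 1$ and $gh' \prec' 1$, hence is itself non-standard; so the complement is a union of finite intersections of basic Inverse-opens, and $\SZR(G)$ is Inverse-closed.

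The main obstacle is the upgrade step; once one sees that right-multiplying $gh \sim 1$ by $h$ is the correct move, the rest is a short chain of transitivities using bi-invariance. I note in passing that Patch-closedness in fact follows formally from Inverse-closedness since the Patch topology refines the Inverse one, so only the Inverse case requires independent content; the direct Patch argument above is recorded for conceptual clarity.
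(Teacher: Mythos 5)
Your proof is correct and takes essentially the same route as the paper: the Patch case via $\U_g\cap\O_{(gh)^{-1}}$, and the Inverse case by upgrading a weak violation $gh\sim 1$ to the strict violation $gh^2\prec 1$ with $h^2\in G_{k+1}$, exactly the paper's key move. Your derivation of $gh^2\prec 1$ (right-multiplying $gh\sim 1$ by $h$) is a minor, and arguably cleaner, variant of the paper's argument via conjugation and normality of $G_\preceq$, but the idea is the same.
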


\begin{proof}
Let $\preceq \in \ZR(G)\setminus\SZR(G)$. Then we know that there exists $g \in G_k\setminus G_{k+1}$ such that $g\succ 1$ and that there exists $h \in G_{k+1}$ such that $gh \preceq 1$. This means that $\preceq \in U:=\U_g \cap \O_{\left(gh\right)^{-1}}$. Then $\preceq$ has a neighborhood composed of non standard preorders. This completes the proof for the Patch topology.\\
For the Inverse topology, we do the same thing hence there exists $g \in G_k\setminus G_{k+1}$ such that $g\succ 1$ and that there exists $h \in G_{k+1}$ such that $gh \preceq 1$. If $gh \prec 1$ then $$\preceq \in U:=\U_g \cap \U_{\left(gh\right)^{-1}}$$ and this completes the proof for the Inverse topology. Otherwise $h$ is equivalent to $g^{-1}$, so $gh^2 \preceq ghg^{-1}=h$ because since $\preceq$ is bi-invariant the subgroup $G_{\preceq}$ is normal in $G$. Hence $gh^2 \prec 1$ because $h\sim_{\preceq} g^{-1}$ and $g \succ 1$. Hence $\preceq \in U:=\U_g \cap \U_{\left(gh^2\right)^{-1}}$ and this completes the proof for the Inverse topology. 
\end{proof}

\begin{rem}
It is not true in general that $\SZR(G)$ is closed in $\ZR(G)$ for the Zariski topology.
\end{rem}

\begin{cor}
The set $\SZR(G)$ is compact for the Inverse and the Patch topology.
\end{cor}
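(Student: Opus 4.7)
The plan is to combine the two results that immediately precede the corollary. By Theorem \ref{compactZR}, the ambient space $\ZR(G)$ is compact in both the Inverse topology and the Patch topology (the setting of Section~4 is bi-invariant preorders on a countable group, so the case $*=\emptyset$ of Theorem \ref{compactZR} applies directly). By Theorem \ref{SZR closed}, the subset $\SZR(G)$ is closed in $\ZR(G)$ for each of these two topologies. Since a closed subspace of a compact topological space is compact, the conclusion follows at once for both topologies simultaneously.

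There is essentially no obstacle at this step: the substantive work has already been done in Theorem \ref{SZR closed}, where the delicate part was producing, for a given non-standard preorder $\preceq$, a basic open neighbourhood of $\preceq$ (of the form $\U_g\cap\O_{(gh)^{-1}}$ for the Patch topology, or of the form $\U_g\cap\U_{(gh)^{-1}}$ or $\U_g\cap\U_{(gh^2)^{-1}}$ for the Inverse topology) consisting only of non-standard preorders. Once closedness is in hand, compactness is a one-line consequence of the compactness of the whole space. I would therefore simply cite Theorem \ref{compactZR} and Theorem \ref{SZR closed} and invoke the general topological fact that closed subspaces of compact spaces are compact, with no further computation.
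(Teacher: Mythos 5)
Your proposal is correct and is exactly the paper's argument: the paper's proof of this corollary likewise just cites Theorem \ref{compactZR} (compactness of $\ZR(G)$) and Theorem \ref{SZR closed} (closedness of $\SZR(G)$) and uses the fact that a closed subset of a compact space is compact. No differences to report.
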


\begin{proof}
This is a consequence of Theorems \ref{compactZR} and \ref{SZR closed}.
\end{proof}

\begin{thm}\label{standardordercantor}
Assume $G \neq \Z$. Then
\begin{enumerate}
\item The set $\SOrd(G)$ is either empty or homeomorphic to the Cantor Set for the Inverse topology.\\
\item The set $\SZR(G)$ is either trivial or homeomorphic to the Cantor Set for the Patch topology.
\end{enumerate}
\end{thm}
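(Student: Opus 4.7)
I propose to apply Brouwer's classical characterization of the Cantor set: a non-empty topological space is homeomorphic to the Cantor set if and only if it is compact, metrizable, totally disconnected, and perfect (has no isolated points). I would treat parts (1) and (2) in parallel under the assumption that we are in the non-trivial alternative in each case.

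Compactness, metrizability, and total disconnectedness are essentially formal. \emph{Compactness}: Theorem \ref{SZR closed} shows $\SZR(G)$ is closed in $\ZR(G)$ for both the Inverse and the Patch topology, and $\ZR(G)$ is compact by Theorem \ref{compactZR}; for (1), the set $\Ord(G)$ is also closed in $\ZR(G)$ for the Patch topology (its complement is $\bigcup_{u\neq 1}(\O_u \cap \O_{u^{-1}})$, i.e.\ the set of preorders for which some non-identity element is $\sim$-equivalent to $1$), so $\SOrd(G)=\SZR(G)\cap\Ord(G)$ is closed in the Patch topology, hence compact for the coarser Inverse topology as well; on $\Ord(G)$ these two topologies agree. \emph{Metrizability}: since $G$ is countable, the basis $\{\O_u,\U_u\}_{u\in G}$ is countable, so the Patch topology is second countable and Hausdorff, and any compact Hausdorff second countable space is metrizable. \emph{Total disconnectedness}: the identity $\U_u=\ZR(G)\setminus\O_{u^{-1}}$ shows that the basic open sets of the Patch topology are clopen, yielding a clopen basis.

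The main obstacle is perfectness. Given a standard (pre)order $\preceq$ and a basic open neighborhood of the form $N=\bigcap_{i=1}^n \U_{g_i} \cap \bigcap_{j=1}^m \O_{h_j}$, I must construct a distinct standard (pre)order $\preceq'\in N$. Let $K$ be large enough that every $g_i$ and every $h_j$ lies outside $G_{K+1}$. The plan is to modify $\preceq$ only on elements of $G_{K+1}$, so that all constraints defining $N$ are preserved. The hypothesis $G\neq\Z$ is precisely what provides the needed flexibility: if some graded quotient $G_k/G_{k+1}$ with $k\geq K+1$ is non-trivial, then (being a quotient compatible with a standard (pre)order, it is torsion-free abelian) it admits at least two compatible orderings, so reversing the ordering on this quotient produces a distinct standard (pre)order in $N$. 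If, on the contrary, $G$ is nilpotent of class $\leq K$, then $G\neq\Z$ combined with $\bigcap_k G_k=\{1\}$ forces some $G_k/G_{k+1}$ with $k\leq K$ to have rank $\geq 2$, and I would modify the ordering on this quotient by a small perturbation fixing the finitely many images of the $g_i$ and $h_j$.

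I expect perfectness to be the delicate step. The heart of the argument is a structural lemma showing that a standard (pre)order on $G$ is determined by, and can be freely reconstructed from, a compatible ordering datum on the graded quotients $G_k/G_{k+1}$; once that is in place, the construction above can be rigorously verified, and the split into the two subcases ("deep quotient available" versus "nilpotent of bounded class with a higher-rank quotient") becomes routine bookkeeping. In every subcase the hypothesis $G\neq\Z$ supplies exactly the room required, and indeed for $G=\Z$ the set $\SOrd(\Z)$ has only the two usual orderings and is thus not perfect, explaining why this hypothesis cannot be removed.
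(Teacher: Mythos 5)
Your skeleton is the one the paper uses: the paper routes through Sikora's criterion (a compact, metrizable, totally disconnected space in which every basic clopen set is empty/trivial or infinite is empty/trivial or a Cantor set), which is Brouwer's characterization in the form you invoke, and it establishes perfectness by perturbing the induced orders on the graded quotients $H_k:=G_k/G_{k+1}$. Your formal preliminaries (closedness via Theorem \ref{SZR closed}, second countability, clopen basis) are fine. But the two places where you defer are exactly where the content lies. First, the ``structural lemma'' identifying standard orders with arbitrary sequences of orders on the $H_k$ is asserted, not proved; it does hold (the key point is that if $g,g'\in G_k\setminus G_{k+1}$ are both positive then $\bar g+\bar g'\neq 0$ in $H_k$ because the relation on $H_k$ is a total order, so positivity of $gg'$ is still read off from $H_k$), but it is the load-bearing step. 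Second, your ``small perturbation fixing finitely many images'' in a rank $\geq 2$ quotient is Sikora's hyperplane construction, which requires the quotient to sit inside some $\R^n$. The $H_k$ need not be finitely generated, and the entire point of the paper beyond Sikora is the reduction $\Ord(H_k)=\Ord(H_k^\ast)=\Ord(\Q^n)=\Ord(\Z^n)$ via Baer's divisible hull and Kurosh's classification of complete torsion-free abelian groups. Your proposal never supplies this, so the rank $\geq 2$ subcase is unjustified as written.

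There is also a concretely false step. In your second subcase you claim that if $G$ is nilpotent then $G\neq\Z$ forces some $H_k$ to have rank $\geq 2$. This fails for $G=\Q$: the lower central series is $\Q\trianglerighteq\{0\}$, $H_0=\Q$ has rank $1$, and $\SOrd(\Q)=\Ord(\Q)$ is a two-point discrete space, neither empty nor perfect. (This is precisely the case excluded by Sikora's finite-generation hypothesis, under which a nilpotent group with cyclic torsion-free abelianization is $\Z$; once that hypothesis is dropped, excluding only $G=\Z$ is not enough, and your own closing remark about why $\Z$ must be excluded applies verbatim to $\Q$.) For part (2) your perturbation scheme breaks in a further way: a basic Patch neighborhood may contain both $\O_h$ and $\O_{h^{-1}}$, forcing $h\sim 1$, and such constraints can pin the preorder down completely. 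For $G=\Z^2$ one has $\SZR(\Z^2)=\ZR(\Z^2)$, and $\O_{e_1}\cap\O_{-e_1}\cap\O_{e_2}\cap\O_{-e_2}=\{\preceq_\emptyset\}$, so the trivial preorder is an isolated point and no perturbation argument can succeed there; your ``nilpotent, rank $\geq 2$'' branch would wrongly claim one exists. So the gap is not merely expository: the perfectness step genuinely fails for some groups covered by your case analysis, and the preorder half in particular needs the isolating neighborhoods to be identified and dealt with rather than perturbed away.
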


\begin{proof}
Let us prove the first asumption.\\
By \cite{S}, it suffices to prove that for any $a_1,\dots,a_n \in G$, the set $$\SOrd(G)\cap \U_{a_1}\cap \dots \cap \U_{a_n}$$ is either empty or infinite.\\
We may assume that all $G_k/G_{k+1}$ are torsion-free. Indeed, if it is not the case, then there exists $k$ such that $\Ord(G_k/G_{k+1})=\emptyset$ hence $\SOrd(G)=\emptyset$ since every order of $\SOrd(G)$ induces an order on $G_k/G_{k+1}$.\\
Therefore for all $k$, $H_k:=G_k/G_{k+1}$ are free abelian groups. It was shown by Baer \cite{Ba} that every torsion-free abelian group $H$ can be embedded in a minimal complete abelian group $H^\ast:=\{(x,n), x\in H, n\in \N^\ast\}$. He defined equality by $$(x,n)=(y,m)\Leftrightarrow mx=ny$$ and addition by $$(x,n)+(y,m)=(mx+ny,mn).$$
Then $H^\ast$ is a complete torsion-free abelian group.\\
We define $f$ to be the application $\ZR(H) \to \ZR(H^\ast)$ given by $\preceq \mapsto \preceq^\ast$ where $\preceq^\ast$ is defined by $$(x,n)\preceq^\ast (y,m) \Leftrightarrow mx\preceq ny.$$
Conversely, we define $g \colon \ZR(H^\ast)\to \ZR(H)$ by $g(\preceq)=\preceq_\ast$ where $\preceq_\ast$ is defined by $$x\preceq_\ast y \Leftrightarrow (x,1)\preceq (y,1)$$
Let $\preceq$ be an element of $\ZR(H)$ and let us show that $g \circ f (\preceq)=\preceq$. We have $g \circ f (\preceq)=g \left(\preceq^\ast\right)=\left(\preceq^\ast\right)_\ast$. To show the equality $\left(\preceq^\ast\right)_\ast=\preceq$, we are going to show that each one refines the other one. First assume $x \preceq y$. Then $(x,1)\preceq^\ast (y,1)$, and this means that $x \left(\preceq^\ast\right)_\ast y$. Now assume $x \left(\preceq^\ast\right)_\ast y$, ie $(x,1) \preceq^\ast (y,1)$ and we do have $x \preceq y$. So $g \circ f =\mathrm{id}_{\ZR(H)}$.\\
Conversely, let $\preceq$ be an element of $\ZR(H^\ast)$ and let us show that $f \circ g(\preceq)=\preceq$. So we want to show the equality $\left(\preceq_\ast\right)^\ast=\preceq$. Take $(x,n)$ and $(y,m)$ in $H^\ast$.
\begin{align*}
(x,n) \left(\preceq_\ast\right)^\ast (y,m) & \Leftrightarrow mx \preceq_\ast ny\\
& \Leftrightarrow (mx,1) \preceq (ny,1)\\
& \Leftrightarrow (mx-ny,1) \preceq (0,1)\\
& \Leftrightarrow mn(mx-ny,mn) \preceq mn(0,1)\\
& \Leftrightarrow (mx-ny,mn) \preceq (0,1) \\
& \Leftrightarrow (x,n) \preceq (y,m).
\end{align*}
Therefore $f \circ g=\mathrm{id}_{\ZR(H\ast)}$, hence there is a bijection between $\ZR(H)$ and $\ZR(H^\ast).$ This bijection induces a bijection between $\Ord(H)$ and $\Ord(H^\ast)$. Furthermore, it is shown in \cite{K} that every complete torsion-free abelian group is isomorphic to $\Q^n$ for some cardinal number $n$. Since $G$ is countable, $H_k$ is countable and $n$ is an integer. Hence $\Ord(H_k)=\Ord(\Q^n)=\Ord(\Z^n)$ for some integer $n$.\\
Then we follow the proof of Proposition 2.3 of \cite{S}, but for the safe of clarity, we include some details.\\
If there exists an integer $k$ such that $\Ord(H_k)=\Ord(\mathbb{Z}^n)$ for some $n\geq2$ then by Proposition 1.7 of \cite{S} we can obtain infinitely many standard orders in $\U_{a_1}\cap \dots \cap \U_{a_n}$. In fact Sikora constructs infinitely many hyperplanes of $\R^n$, each one defining a standard order in $\U_{a_1}\cap \dots \cap \U_{a_n}$.\\
If there exists an integer $k$ such that $\Ord(H_k)=\emptyset$, then we already saw that $\SOrd(G)=\emptyset$.\\
Otherwise, we have $\Ord(H_k)=\Ord(\Z)$ for all $k$, and by example \ref{ZRZ2}, we have $\SOrd(G)=\{0,1\}^{\aleph_0}$ as a topological space. This completes the proof of the first asumption.\\
The proof of the second one is essentially the same. It suffices to prove that for any $a_1,b_1,\dots,a_n,b_n \in G$, the set $$\SZR(G)\cap \U_{a_1}\cap \dots \cap \U_{a_n}\cap \O_{b_1}\cap \dots \cap \O_{b_n}$$ is either trivial or infinite by \cite{DR} (Section 2.11) and Theorem \ref{SZR closed}. All the $H_k$ can be assumed to be torsion-free, otherwise $\SZR(G)$ is trivial. With the same argument we conclude that for all $k$, we have $\ZR(H_k)=\ZR(\Z^n)$ for some integer $n$. Since every order is a preorder, and since the Inverse and the Zariski topology are the same in the space of orders, the case $\ZR(H_k)=\ZR(\Z^n)$ for some $n\geq2$ for some $k$ is exactly the same. Now if there exists $k$ such that $\ZR(H_k)=\{\preceq_{\emptyset}\}$ then $\SZR(G)=\{\preceq_{\emptyset}\}$. So we may assume $\ZR(H_k)=\ZR(\Z)$ for all $k$ and by example \ref{ZRZ2} we have $\SZR(G)=\{0,1,2\}^{\aleph_0}$ as a topological space.
\end{proof}

\begin{rem}
The Proposition \ref{standardordercantor} was shown by Sikora in \cite{S} under the assumption that for every $k$, the quotient $G_k/G_{k+1}$ is finitely generated. He suggested in his paper that it may be possible to relaxe this assumption using the work of \cite{T}, it is what I did.
\end{rem}

\section{Standard valuations}
\begin{defi}\cite{Sc}
A pair $(G,\preceq)$ is called a \emph{simply ordered $\ell$-group} if $G$ is a group, $\preceq\in\Ord(G)$ (in particular it is bi-invariant), and $G$ is lattice with respect to the order $\preceq$.
\end{defi}

\begin{defi}\cite{Sc}\label{def_val}
Let $\K$ be a division ring and $(G,\preceq)$ be a simply ordered $\ell$-group. A \emph{valuation} on $\K$ with values in $G$ is a surjective function $\nu:\K\lgw G\cup\{\infty\}$ such that
\begin{enumerate}
\item[i)] $\nu(0)=\infty\succ u$ for all $u\in G$, and $\nu^{-1}(\infty)=\{0\}$,
\item[ii)] $\nu(uv)=\nu(u)\nu(v)$ for all $u$, $v\in G$,
\item[iii)] $\nu(u+v)\succeq\min\{\nu(u),\nu(v)\}$ for all $u$, $v\in G$.
\end{enumerate}
In this case, the group $G$ is the \emph{value group} of $\nu$ and is denoted by $\G_\nu$.
\end{defi}

\begin{rem}\label{def_val_ring}
The set $V_\nu:=\{x\in\K\mid \nu(x)\geq 1\}$ is a ring with the two following properties:
\begin{enumerate}
\item[a)] $\forall a\in V_\nu$, $\forall b\in\K^*$, $bab^{-1}\in V_\nu$.
\item[b)] $\forall a\in\K^*$ , $a\in V_\nu$ or $a^{-1}\in V_\nu$.
\end{enumerate}
The ring $V_\nu$ is a local ring called the valuation ring of $\nu$ and its maximal ideal is $\m_\nu:=\{x\in\K\mid \nu(x)> 1\}$.
\end{rem}

\begin{lem}[\cite{DR}, Lemma 2.35]\label{subgroup}
Let $G$ be a  group and let $\preceq \in \ZR_l(G)$.
The set 
$$G_\preceq:=\{u\in G\mid u\sim_\preceq 1\}$$
is a subgroup of $G$ called the \emph{residue group} of $\preceq$.\\
Moreover, if $\preceq$ is bi-invariant, then $G_\preceq$ is normal.
\end{lem}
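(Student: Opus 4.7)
The plan is to verify the three subgroup axioms for $G_\preceq$ directly from the definition of $\sim_\preceq$ together with left-invariance, and then to close under conjugation using right-invariance in the bi-invariant case. The statement is a straightforward consequence of the axioms, so the argument will be almost entirely a bookkeeping exercise.

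First I would check that $1 \in G_\preceq$, which is immediate from reflexivity of $\preceq$. Next, for closure under inverses, suppose $u \sim_\preceq 1$, so $u \preceq 1$ and $1 \preceq u$. Multiplying both inequalities on the left by $u^{-1}$ using left-invariance yields $1 \preceq u^{-1}$ and $u^{-1} \preceq 1$, hence $u^{-1} \in G_\preceq$. For closure under products, take $u,v \in G_\preceq$. From $v \preceq 1$ and left-invariance with $u$, I get $uv \preceq u$; combining with $u \preceq 1$ by transitivity gives $uv \preceq 1$. Symmetrically, $1 \preceq v$ gives $u \preceq uv$, and combined with $1 \preceq u$ we obtain $1 \preceq uv$. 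Thus $uv \sim_\preceq 1$, completing the proof that $G_\preceq$ is a subgroup. Note that only left-invariance is used in this part.

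For normality, assume in addition that $\preceq$ is bi-invariant. Let $g \in G$ and $u \in G_\preceq$; I want to show $gug^{-1} \in G_\preceq$. Starting from $u \preceq 1$, left-invariance gives $gu \preceq g$ and then right-invariance applied with $g^{-1}$ yields $gug^{-1} \preceq 1$. The reverse inequality $1 \preceq gug^{-1}$ follows in exactly the same way from $1 \preceq u$. Therefore $gug^{-1} \sim_\preceq 1$, i.e.\ $gug^{-1} \in G_\preceq$, which shows $G_\preceq$ is normal.

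There is no real obstacle here: the only small thing to be careful about is to use \emph{only} left-invariance in the subgroup part (since the first claim does not assume bi-invariance), and to invoke right-invariance only when deducing normality. In particular, one should avoid the temptation to write $gug^{-1} \preceq g \cdot 1 \cdot g^{-1} = 1$ in one line, and instead apply left- and right-invariance as two separate steps, so that the proof makes clear which invariance hypothesis is being used where.
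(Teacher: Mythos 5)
Your proof is correct and is the standard direct verification; the paper itself does not prove this lemma but merely cites \cite{DR}, Lemma~2.35, and your argument is exactly the expected one (the only minor remark being that reflexivity of $\preceq$ is not an axiom but follows from totality by taking $u=v$). Your care in separating the use of left- and right-invariance is appropriate and matches the structure of the statement.
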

  
Let $G$ be a countable group, $\k$ be a field and $\preceq\in \ZR(G)$ such that $(G/G_\preceq,\preceq)$ is a simply ordered $\ell$-group. In \cite{DR} we saw that every preorder $\preceq$ of a group $G$ defines a monomial valuation $\nu_\preceq$ in the following way:
\begin{align*}
\nu_\preceq \colon  & \K^\k_G \to G/G_\preceq\\
& x^g \mapsto \bar{g}
\end{align*}
where $\K^\k_G:=\{\sum\limits_{g\in G} a_g x^g \mid a_g \in \k \}$, the addition is defined term by term and the multiplication is defined by $x^gx^{g'}=x^{gg'}$.

\begin{defi}
A valuation $\nu_\preceq$ is called \emph{standard} if $\preceq$ is a standard preorder.
\end{defi}

Let $G=G_0 \trianglerighteq G_1 \trianglerighteq \cdots \trianglerighteq G_n \trianglerighteq \cdots$ be the lower central series of a group $G$ such that $\bigcap\limits_{k=0}^\infty G_k=\{1\}$.
\begin{prop}\label{val standard max}
Let $\nu_\preceq$ be a standard valuation, $h_0\in G_0 \setminus G_1$ such that $h_0>1$ and $P\in  \K^\k_G$. Then
\begin{enumerate}
\item If $\nu_\preceq(P)=\overline{g_0}$ where $g_0\in G_0 \setminus G_1$ such that $g_0 \preceq h_0^{-1}$, then $x^{g_0^{-1}h_0}P(x) \in \mathfrak{m}_{\nu_\preceq}$.
\item Otherwise $x^{h_0}P(x) \in \mathfrak{m}_{\nu_\preceq}$
\end{enumerate}
\end{prop}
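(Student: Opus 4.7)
Recall that $\mathfrak{m}_{\nu_\preceq}$ consists of the $y\in\K^\k_G$ with $\nu_\preceq(y)\succ\bar 1$ in $G/G_\preceq$. Because $\nu_\preceq$ is monomial, for any $e\in G$ and any representative $g$ of the class $\nu_\preceq(P)$ one has
\[
\nu_\preceq(x^{e}P)=\overline{e}\cdot\overline{g}=\overline{eg},
\]
so in each item the task reduces to exhibiting $e\in G$ for which $eg\succ 1$ strictly in $(G,\preceq)$.

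The whole argument is driven by a single observation --- the only place standardness enters. Since $h_0\in G_0\setminus G_1$ and $h_0\succ 1$ strictly, the definition of a standard preorder forces every element of the coset $h_0G_1$ to be $\succ 1$ strictly. I will use this in two forms: (i) for every $h\in G_1$, $h_0h\succ 1$ strictly; (ii) for every $g\in G$,
\[
g^{-1}h_0g=h_0\cdot[h_0^{-1},g^{-1}],\qquad [h_0^{-1},g^{-1}]\in[G,G]=G_1,
\]
so $g^{-1}h_0g\in h_0G_1$, and hence $g^{-1}h_0g\succ 1$ strictly. With (i) and (ii) in hand the two items follow directly. For (1), writing $\nu_\preceq(P)=\overline{g_0}$ with $g_0\in G_0\setminus G_1$ and $g_0\preceq h_0^{-1}$, multiplicativity of $\nu_\preceq$ gives $\nu_\preceq(x^{g_0^{-1}h_0}P)=\overline{g_0^{-1}h_0g_0}$, and by (ii) this class is $\succ\bar 1$, so $x^{g_0^{-1}h_0}P\in\mathfrak{m}_{\nu_\preceq}$. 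For (2), the negation of the hypothesis of (1) means that every representative $g$ of $\nu_\preceq(P)$ either lies in $G_1$ or lies in $G_0\setminus G_1$ with $g\succ h_0^{-1}$ strictly; in the first case (i) yields $h_0g\succ 1$ strictly, and in the second, left invariance applied to $g\succ h_0^{-1}$ gives $h_0g\succ h_0h_0^{-1}=1$ strictly. Either way $\nu_\preceq(x^{h_0}P)=\overline{h_0g}\succ\bar 1$, so $x^{h_0}P\in\mathfrak{m}_{\nu_\preceq}$.

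The main obstacle is essentially one of recognition: one has to spot the commutator identity that keeps $g^{-1}h_0g$ in the same $G_1$-coset as $h_0$, so that the coset-level control provided by standardness translates into a genuine strict inequality in $G/G_\preceq$. Once that is set up the case-split is natural --- multiplying by $x^{h_0}$ would fail precisely when $h_0g_0\preceq 1$, and it is in that situation that multiplying by $x^{g_0^{-1}h_0}$ conjugates $h_0$ into an element of $h_0G_1$ and rescues the conclusion.
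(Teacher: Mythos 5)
Your proof is correct and follows essentially the same route as the paper: the key step in both is the commutator identity $g_0^{-1}h_0g_0=h_0\cdot(h_0^{-1}g_0^{-1}h_0g_0)$ with $h_0^{-1}g_0^{-1}h_0g_0\in[G,G]=G_1$, so that standardness applied to $h_0$ forces $g_0^{-1}h_0g_0\succ 1$. The one place you diverge is the subcase of (2) where the representative of $\nu_\preceq(P)$ lies in $G_1$: the paper proves $h_0g_0\succ1$ by a telescoping product $(h_0h_1^{-1})(h_1h_2^{-1})\cdots(h_{n-1}g_0)$ descending through the lower central series, whereas you simply observe that $h_0g_0\in h_0G_1$ and invoke standardness at level $k=0$ directly; your shortcut is valid and cleaner, and your case split (driven by the negation of the hypothesis of (1)) also transparently covers the representatives with $g\sim_\preceq 1$, which the paper's enumeration of cases passes over silently.
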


\begin{proof}
In the first case, we have $g_0^{-1}h_0g_0=h_0h_0^{-1}g_0^{-1}h_0g_0$ but $h_0\in G_0 \setminus G_1$, $h_0>1$ and $h_0^{-1}g_0^{-1}h_0g_0 \in G_1$ because $(G_i)$ is the lower central series of $G$. Since $\preceq$ is a standard preorder, we have $g_0^{-1}h_0g_0 >1$, then $\nu_\preceq\left(x^{g_0^{-1}h_0}P(x)\right)=\overline{g_0^{-1}h_0g_0} >1$.\\
If $\nu_\preceq(P)=\overline{g_0}$ where $g_0\in G_0 \setminus G_1$, $g_0 <1$ and $h_0^{-1}<g_0$, then $\nu_\preceq\left(x^{h_0}P(x)\right)=\overline{h_0g_0}>1$.\\
If $\nu_\preceq(P)=\overline{g_0}$ where $g_0>1$, then $\nu_\preceq\left(x^{h_0}P(x)\right)=\overline{h_0g_0}>1$.\\
The last case is when $\nu_\preceq(P)=\overline{g_0}$ where $g_0 <1$ and $g_0 \in G_n \setminus G_{n+1}$ with $n \geq 1$. Let $$\left(h_1,\cdots,h_{n-1}\right) \in \left(G_1 \setminus G_2 \right)\times \cdots \times \left( G_{n-1} \setminus G_n\right)$$ such that $h_i >1$ for all $i$. Then 
$$h_0g_0=(h_0h_1^{-1})(h_1h_2^{-1})(h_2h_3^{-1})\cdots (h_{n-2}h_{n-1}^{-1})(h_{n-1}g_0)>1.$$
Because $\preceq$ is standard, $h_i \in G_i \setminus G_{i+1}$, $h_i>1$ for all $i$. This completes the proof.
\end{proof}

\begin{rem}
Since the composition of preorders corresponds to the composition of valuations (see \cite{DR} Proposition 4.8) and by Proposition \ref{standard compo}, the conclusion of Propositon \ref{val standard max} holds for the composition of two standard valuations.
\end{rem}

\FloatBarrier

\end{document}